\newcommand{\pf}{\begin{proof}}
\newcommand{\epf}{\end{proof}}
\newcommand{\eq}{\begin{equation}}
\newcommand{\eeq}{\end{equation}}
\newcommand{\eqn}{\begin{equation*}}
\newcommand{\eeqn}{\end{equation*}}
\newtheorem{theorem}[equation]{Theorem}%[section]
\newtheorem{prop}[equation]{Proposition}
\newtheorem{lemma}[equation]{Lemma}
\theoremstyle{remark}
\theoremstyle{definition}
\numberwithin{equation}{section}
\begin{document}

\title{SPECTRA FOR GELFAND PAIRS ASSOCIATED WITH THE FREE TWO STEP
NILPOTENT LIE GROUP}
\author{Jingzhe Xu}
\address[Xu]{Department of Mathematics, Hong Kong University of Science
and technology,
Clear Water Bay, Kowloon, Hong Kong SAR, China}
\email{jxuad@ust.hk}
\abstract{Let $F(n)$ be a connected and simply connected free 2-step
nilpotent lie group and $K$ be a compact subgroup of Aut($F(n)$). We say
that $(K,F(n))$ is a Gelfand pair when the set of integrable $K$-invariant
functions on $F(n)$ forms an abelian algebra under convolution. In this
paper, we consider the case when $K=O(n)$. In [1], we know the only possible Galfand pairs
for $(K,F(n))$ is $(O(n),F(n))$, $(SO(n),F(n))$. So we just consider the
case $(O(n),F(n))$, the other case can be obtained in the similar way.We study the natural topology
on $\Delta (O(n),F(n))$ given by uniform convergence on compact subsets in
$F(n)$. We show $\Delta (O(n),F(n))$ is a complete metric space. Our main
result gives a necessary and sufficient result for the sequence of the
"type 1" bounded $O(n)$-spherical functions uniform convergence to the
"type 1" bounded $O(n)$-spherical function on compact sets in $F(n)$.
What's more, the "type 1" bounded $O(n)$-spherical functions are dense in
$\Delta (O(n),F(n))$. Further, we define the Fourier transform according to the
"type 2" bounded $O(n)$-spherical functions and gives some basic
properties of it.}
\endabstract

\keywords{Gelfand pairs, $O(n)$-bounded spherical functions}
\subjclass[2017]{22E46, 22E47}
%\date{\today}
%
\maketitle     % maketitle AFTER the abstract in AMS classes
%
%%%%%%%%%%%%% Section1 %%%%%%%%%%%%%%%%%%%%%%%%%%%%%%%%%%%%%%
\section{Introduction}
%%%%%%%%%%%%%%%%%%%%%%%%%%%%%%%%%%%%%%%%%%%%%%%%%%%%%%%%%%%%%
Given a locally compact group $G$ and compact subgroup $K\subseteq G$, the
pair $(G,K)$ is called a Gelfand pair if $L^{1}(G//K)$, the space of
integrable, $K$-bi-invariant functions on $G$, is commutative. Perhaps the
best known examples are those defining symmetric spaces, that is, when $G$
is a connected semisimple lie group of finite center, and $K$ is a maximal
compact subgroup. The analysis associated with such pairs plays an
important role in the representation theory of semisimple lie groups and
has been extensively developed in the last four
decades.(cf.e.g.[2],[3]).In sharp contrast to this case, one might begin
by assuming that $G$ is a solvable lie group. But then, if $G$ is simply
connected for example, there maybe no non-trivial compact subgroups. One
can, however, consider pairs of the form $(K\rtimes G, K)$, where $K$ is a
compact subgroup of Aut($G$), the group of automorphism of $G$.

We study the connected, simply connected free two-step nilpotent lie
groups $F(n)$ for two reasons. Firstly, for any nilpotent lie group $N$
and a compact group $K\in Aut(N)$, $(K\rtimes N, K)$ is a Gelfand pair if
and only if $N$ is at most two step. Secondly, every two-step nilpotent
lie group is a quotient of a free two step nilpotent lie group. Therefore,
the connected, simply connected free two-step nilpotent lie group $F(n)$
is defined in Section 2.  We
construct an isomorphism between the heisenberg group and some group with
respect to $F(n)$. Then we get the "type 1" and "type 2" bounded
$O(n)$-spherical functions. Note that for the 2-step nilpotent lie group
$N$, and the Gelfand pair $(K,N)$, the bounded $K$-spherical functions
are the same as the positive definite $K$-spherical functions[4]. In fact,
this is not true in general for the semisimple case.

Our focus here is on the topology of the bounded "type 1", "type 2"
$O(n)$-spherical functions respectively, where the usual weak*-topology
coincides with the compact-open topology on $\Delta (O(n),F(n))$. Our main
result is stated as Theorem 4.1. It asserts that a "type 1" bounded
$O(n)$-spherical sequence $(\psi_{N})_{N}^{\infty}$ converges to a "type
1" bounded $O(n)$-spherical funtion $\psi$ if and only if
$\widehat{L_{\gamma_{j}}}(\omega_{\lambda_{N},\alpha_{N}})\rightarrow
\widehat{L_{\gamma_{j}}}(\omega_{\lambda,\alpha})$, where $i=1,\cdots
d$;$\widehat{T}(\omega_{\lambda_{N},\alpha_{N}})\rightarrow
\widehat{T}(\omega_{\lambda,\alpha})$;$\gamma_{N}\rightarrow \gamma$.Here
${L_{\gamma_{1}},\cdots L_{\gamma_{d}},T}$ is a generator of the algebra
$D_{k}(H_{n})$, which means the left-$H_{n}$-invariant and $K$-invariant
differential operators on $H(n)$. And $\omega_{\lambda_{N},\alpha_{N}}$,
$\omega_{\lambda,\alpha}$ are the "type 1"bounded $K$ spherical functions
on $H(n)$.$\gamma_{N},\gamma$ are the parameters that will be introduced
later. We require a careful analysis of the behavior of such eigenvalues,
and these results are described in Section 3.

We refine our description of the topology on the Gelfand space by proving
two final results. Theorem 4.5 asserts that $\Delta (K,F(n))$ is complete.
That is, if a sequence of bounded $K$-spherical functions converge to some
function in the compact-open topology, then the limit is necessary a
bounded $K$-spherical function. Later, we will asserts the the "type 1"
bounded $O(n)$-spherical functions are dense in $\Delta (O(n),F(n))$.
Section 5 contains a description of the Godemental-Plancherel measure and
the $O(n)$-spherical transform and then gives a definition for the Fouier
transform induced by the
"type 2" bounded $O(n)$-spherical functions. Also, we obtain some
important properties about it.

%
%%%%%%%%%%% Section2 %%%%%%%%%%%%%%%%%%%%%%%%%%%%%%%%%%%%%%%%%%%%%%%%%%
\section{Notation and Preliminaries}
%%%%%%%%%%%%%%%%%%%%%%%%%%%%%%%%%%%%%%%%%%%%%%%%%%%%%%%%%%%%%
In this section, we will introduce some basic knowledge and significant
results about the free two-step nilpotent lie group.

First Definition. Let $\mathcal{N}_{p}$ be the (unique up to isomorphism)
free two-step nilpotent Lie algebra with p generators. The definition
using the universal property of the free nilpotent Lie algebra can be
found in [7, Chapter V §5]. Roughly speaking, $\mathcal{N}_{p}$ is a
(nilpotent)Lie algebra with $p$ generators $X_{1},\cdots X_{p}$, such that
the vectors $X_{1},\cdots X_{p}$ and $X_{i,j}=[X_{i},X_{j}],i<j$ form a
basis; we call this basis the canonical basis of $\mathcal{N}_{p}$.

We denote by $\mathcal{V}$ and $\mathcal{Z}$, the vectors spaces generated
by the families of vectors $X_{1},\cdots X_{p}$ and
$X_{i,j}=[X_{i},X_{j}], 1\leqslant i<j \leqslant p$ respectively; these
families become the canonical base of $\mathcal{V}$ and $\mathcal{Z}$.
Thus $\mathcal{N}_{p}=\mathcal{V}\bigoplus \mathcal{Z}$, and $\mathcal{Z}$
is the center of $\mathcal{N}_{p}$. With the canonical basis, the vector
space $\mathcal{Z}$ can be identified with the vector space of
antisymmetric $p\times p$-matrices $\mathcal{A}_{p}$. Let
$z=dimZ=p(p-1)/2$.

The connected simply connected nilpotent Lie group which corresponds to
$\mathcal{N}_{p}$ is called the free two-step nilpotent Lie group and is
denoted $N_{p}$. We denote by exp:$\mathcal{N}_{p}\rightarrow N_{p}$the
exponential map.

In the following, we use the notations $X+A\in \mathcal{N}$,$exp(X+A)\in
N$ when $X\in \mathcal{V},A\in \mathcal{Z}$. We write $p=2p^{'} or
2p^{'}+1$.

A Realization of $\mathcal{N}_{p}$. We now present here a realization of
$\mathcal{N}_{p}$, which will be helpful
to define more naturally the action of the orthogonal group and
representations of $N_{p}$.

Let $(\mathcal{V},<,>)$ be an Euclidean space with dimension $p$. Let
$O(\mathcal{V})$ be the group of
orthogonal transformations of $\mathcal{V}$, and $SO(\mathcal{V})$ Its
special subgroup. Their common
Lie algebra denoted by $\mathcal{Z}$, is identified with the vector space
of antisymmetric transformations of $\mathcal{V}$. Let
$\mathcal{N}=\mathcal{V}\bigoplus \mathcal{Z}$ be the exterior direct sum
of the vector spaces $\mathcal{V}$ and $\mathcal{Z}$.

Let $[,]:\mathcal{V}\times \mathcal{V}\rightarrow \mathcal{Z}$ be the
bilinear application given by:

$[X,Y].(V)=<X,V>Y-<Y,V>X   \\ where X,Y,V\in \mathcal{V}$

We also denote by $[,]$ the bilinear application extended to
$\mathcal{N}\times \mathcal{N}\rightarrow \mathcal{N}$ by:

$[.,.]_{\mathcal{N}\times \mathcal{Z}}=[.,.]_{\mathcal{Z}\times \mathcal{N}}=0$
This application is a Lie bracket. It endows $\mathcal{V}$ with the
structure of a two-step nilpotent Lie algebra.

As the elements $[X,Y],X,Y\in \mathcal{V}$ generate the vector space
$\mathcal{Z}$, we also
define a scalar product $<，>$ on $\mathcal{Z}$ by:

$<[X,Y],[X^{'},Y^{'}]>=<X,X^{'}><Y,Y^{'}>-<X,Y^{'}><X^{'},Y>$

where $X,Y,X^{'},Y^{'}\in \mathcal{V}$.

It is easy to see $\mathcal{V}$ as a realization of $\mathcal{N}_{p}$ when
an orthonormal basis
$X_{1},\cdots X_{p}$ of $(\mathcal{V},<,>)$ is fixed.

We remark that $<[X,Y],[X^{'},Y^{'}]>=<[X,Y]X^{'},Y^{'}>$, and so we have
for an antisymmetric transformation $A\in \mathcal{Z}$, and for $X,Y\in
\mathcal{V}$:

$<A,[X,Y]>=<A.X,Y>$

This equality can also be proved directly using the canonical basis of
$\mathcal{N}_{p}$.

Actions of Orthogonal Groups. We denote by $O(\mathcal{V})$ the group of
orthogonal linear maps of $(\mathcal{V},<,>)$, and by $O_{p}$ the group of
orthogonal $p\times p$-matrices.

On $\mathcal{N}_{p}$ and $N_{p}$. The group $O(\mathcal{V})$ acts on the
one hand by automorphism on $\mathcal{V}$, on the other hand by the
adjoint representation $Ad_{\mathcal{Z}}$ on $\mathcal{Z}$. We obtain an
action of $O(\mathcal{V})$ on $\mathcal{N}=\mathcal{V}\bigoplus
\mathcal{Z}$. Let us prove that this action respects the Lie bracket of
$\mathcal{N}$. It suffices to show for
$X,Y,Z\in \mathcal{V}$ and $k\in O(\mathcal{V})$:
\begin{equation}\label{l-invariant elements}
\begin{split}
&[k.X,k.Y](V)=<k.X,V>k.Y-<k.Y,V>k.X\\
&k.(<X,k^{t}.V>Y-<Y,k^{t}.V>X\\
&=k.[X,Y](k^{-1}.V)=Ad_{\mathcal{Z}}k.[X,Y].
\end{split}
\end{equation}

We then obtain that the group $O(\mathcal{V})$ and also its special
subgroup $SO(\mathcal{V})$.
acts by automorphism on the Lie algebra $\mathcal{N}$, and finally on the
Lie group $N$.

Suppose an orthonormal basis $X_{1},\cdots X_{p}$ of $(\mathcal{V},<,>)$
is fixed; then the vectors
$X_{i,j}=[X_{i},X_{j}],1\leq i<j\leq p$, form an orthonormal basis of
$\mathcal{V}$ and we can identify:

the vector space $\mathcal{Z}$ and $\mathcal{A}_{p}$.

the group $O(\mathcal{V})$ with $O_{p}$.

the adjoint representation $Ad_{\mathcal{Z}}$ with the conjugate action of
$O_{p}$ and $\mathcal{A}_{p}: k.A=kAk^{-1}$, where $k\in O_{p}, A\in\mathcal{A}_{p}$.

Thus the group $O_{p}\sim O(\mathcal{V})$ acts on $\mathcal{V}\sim
\mathbb{R}^{p}$ and $\mathcal{Z}\sim \mathcal{A}_{p}$, and consequently on
$\mathcal{N}_{p}$. Those actions can be directly defined; and the equality
$[k.X,k.Y]=k.[X,Y],k\in O_{p},X,Y\in \mathcal{V}$, can then be computed.

On $\mathcal{A}_{p}$. Now we describe the orbits of the conjugate actions
of $O_{p}$ and $SO_{p}$ on $\mathcal{A}_{p}$. An arbitrary antisymmetric
matrix $A\in \mathcal{A}_{p}$ is $O_{p}$-conjugated to an antisymmetric
matrix $D_{2}(\wedge)$ where $\wedge=(\lambda_{1},\cdots
,\lambda_{p^{'}})\in \mathbb{R^{'}}$ and:

$D_{2}(\wedge)=\begin{bmatrix}
\ \lambda_{1}J &  0   &  0   & 0      \\
0 & \ddots  & 0  &  0   \\
0      &  0   &\lambda_{p^{'}}J  &   0       \\
0      &  0    &   0    &   (0)
\end{bmatrix}$

where $J:=\begin{bmatrix}
0      & 1      \\
-1      &  0
\end{bmatrix}$

((0) means that a zero appears only in the case $p=2p^{'}+1$) Furthermore,
we can assume that $\wedge$ is in $\overline{\mathcal{L}}$, where we
denote by $\mathcal{L}$ the set of $\wedge=(\lambda_{1},\cdots
,\lambda_{p^{'}})\in \mathbb{R^{'}}$ such that $\lambda_{1}\geq \cdots
\lambda_{p^{'}}\geq 0$.

Parameters. To each $\wedge\in \overline{\mathcal{L}}$, we
associate:$p_{0}$ the number of $\lambda_{i}\neq0$, $p_{1}$ the number of
distinct $\lambda_{i}\neq0$, and $\mu_{1},\cdots \mu_{p^{1}}$ such that:

$\{\mu_{1}> \mu_{2}> \cdots >\mu_{p_{1}}>0\}=\{\lambda_{1}\geq
\lambda_{2}\geq \cdots \geq \lambda_{p_{0}}>0\}$

We denote by $m_{j}$ the number of $\lambda_{i}$ such that
$\lambda_{i}=\mu_{j}$, and we put

$m_{0}:=m_{0}^{'}:=0$ and for $j=1,\cdots p_{1} \
m_{j}^{'}:=m_{1}+\cdots+m_{j}$.

For $j=1,\cdots p_{1}$, let $pr_{j}$ be the orthogonal projection of
$\mathcal{V}$ onto the space generated by the vectors $X_{2i-1},X_{2i}$,
for $i=m_{j-1}^{'}+1,\cdots m_{j}^{'}$.

Let $\mathcal{M}$ be the set of $(r,\wedge)$ where $\wedge \in
\mathcal{L}$, and $r\geq 0$, such that $r=0$ if $2p_{0}=p$.

Expression of the bounded spherical functions. The bounded spherical
functions of $(N_{p},K)$ for $K=O_{p}$, are parameterized by

$(r,\wedge)\in \mathcal{M}$ (with the previous notations
$p_{0},p_{1},\mu_{i},pr_{j}$ associated to $\wedge$),

$l\in \mathbb{N}^{p_{1}}$ if $\wedge\neq 0$, otherwise $\varnothing$.

Let $(r,\wedge)$,$l$ be such parameters. Then we have the following two
types of bounded $O(n)$-spherical functions:

For $n=exp(X+A)\in N$.

Type 1:$\phi^{r,\wedge,l}(n)=\int_{K}e^{ir<X_{p}^{*},k.X
>}\omega_{\wedge,l}(\Psi_{2}^{-1}(\overline{q_{1}}(k.n)))dk$.

Type 2:$\phi^{\upsilon}(n)=\int_{K}e^{ir<X_{p}^{*},k.X >}dk$.
Here $X_{p}^{*}$ is the unit $K_{\rho}$-fixed invariant vector.
For a Gelfand pair $(H^{p_{0}},K(m;p_{1};p_{0}))$, we have
$\omega_{\wedge,l}$ is the "type 1" bounded $K(m;p_{1};p_{0})$-spherical
functions for the Heisenberg group $H^{p_{0}}$. We will introduce it next.
$\Psi_{2}$ is an isomorphism between $H^{p_{0}}$ with a group with respect
to $F(n)$, which will be introduced later.

We use the following law of the Heisenberg group $\mathbb{H}^{p_{0}}$:

$\forall h=(z_{1},\ldots ,z_{p_{0}},t)$ ,
$h^{'}=(z_{1}^{'},\ldots ,z_{p_{0}}^{'},t^{'})\in \mathbb{H}^{p_{0}}=\mathbb{C}^{p_{0}}\times \mathbb{R}$

$h.h^{'}=(z_{1}+z_{1}^{'},\ldots ,z_{p_{0}}+z_{p_{0}}^{'},t+t^{'}+\frac{1}{2}\sum_{i=1}^{p_{0}}\mathfrak{F}z_{i}\overline{z}_{i}^{'})$

The unitary $p_{0}\times p_{0}$ matrix group $U_{p_{0}}$ acts by automorphisms on $\mathbb{H}^{p_{0}}$. Let us describe some subgroups of $U_{p_{0}}$. Let $p_{0},p_{1}\in \mathbb{N}$, and $m=(m_{1},\ldots ,m_{p_{1}})\in \mathbb{N}^{p_{1}}$ be fixed such that $\sum_{j=1}^{p_{1}}m_{j}=p_{0}$. Let $K(m;p_{1};p_{0})$ be the subgroup of $U_{p_{0}}$ given by:

$K(m;p_{1};p_{0})=U_{m_{1}}\times \ldots \times U_{m_{p_{1}}}$.

The expression of spherical functions of $(\mathbb{H}^{p_{0}},K(m;p_{1};p_{0}))$ can be found in the same way as in the case $m=(p_{0})$, $p_{1}=1$
i.e. $K=U_{p_{0}}$.

Stability group $K_{\rho}=\{k\in K: k.\rho=\rho\}=\{k\in K\subset G: k.f\in N.f\}$. The aim of this paragraph is to describe the stability group $K_{\rho}$
of $\rho\in T_{rX_{p}}^{*}+D_{2}(\wedge)$.

Before this, let us recall that the orthogonal $2n\times 2n$ matrices which commutes with $D_{2}(1,\ldots ,1)$ have determinant one and form the group$Sp_{n}\bigcap O_{2n}$. This group is isomorphism to $U_{n}$; the isomorphism is denoted $\psi_{1}^{(n)}$, and satisfies:

$\forall k,X$: $\psi_{c}^{(n)}(k.X)=\psi_{1}^{(n)}(K)\psi_{c}^{(n)}(X)$,

where $\psi_{c}^{(n)}$ is the complexification :

$\psi_{c}^{(n)}(x_{1},y_{1};\ldots ;x_{n},y_{n})=(x_{1}+iy_{1},\ldots ,x_{n}+iy_{n})$.

Now, we can describe $K_{\rho}$:
\begin{prop}
Let $(r,\Lambda)\in \mathcal{M}$. Let $p_{0}$ be the number of $\lambda_{i}\neq 0$, where $\wedge=(\lambda_{1},\ldots ,\lambda_{p^{'}})$, and $p_{1}$ the number of distinct $\lambda_{i}\neq 0$. We set $\widetilde{\wedge}=(\lambda_{1},\ldots ,\lambda_{p_{0}})\in \mathbb{R}^{p_{0}}$.
Let $\rho\in T_{f}$ where $f=rX_{p}^{*}+D_{2}(\wedge)$.

If $\wedge=0$, then $K_{\rho}$ is the subgroup of $K$ such that $k.rX_{p}^{*}=rX_{p}^{*}$ for all $k\in K_{\rho}$.

If $\wedge\neq 0$, then $K_{\rho}$ is the direct product $K_{1}\times K_{2}$, where:

$K_{1}=\{k_{1}=\begin{bmatrix}
\widetilde{k_{1}}     &      0      \\
0 & Id
\end{bmatrix} \mid \widetilde{k_{1}}\in SO(2p_{0}) \  D_{2}(\widetilde{\wedge})\widetilde{k}_{1}=\widetilde{k}_{1}D_{2}(\widetilde{\wedge})\}$

$K_{2}=\{k_{2}=\begin{bmatrix}
Id    &      0      \\
0 & \widetilde{k_{1}}
\end{bmatrix} \mid \widetilde{k_{2}}.rX_{p}^{*}=rX_{p}^{*}\}$.

Furthermore, $K_{1}$ is isomorphism to the group $K(m;p_{0};p_{1})$.
\begin{proof}
We keep the notations of this proposition, and we set $A^{*}=D_{2}(\wedge)$ and $X^{*}=rX_{p}^{*}$. It is easy to prove:

$K_{\rho}=\{k\in K: kA^{*}=A^{*}k \ and \ kX^{*}=X^{*}k\}$.

If $\wedge=0$, since $K_{\rho}$ is the stability group in $K$ of $X^{*}\in \mathcal{V}^{*}\sim \mathbb{R}^{p}$. So the
first part of Proposition 2.2 is proved.

Let us consider the second part. $\wedge\neq 0$ so we have

$A^{*}=\begin{bmatrix}
D_{2}(\widetilde{\wedge})    &     0      \\
0      &  0
\end{bmatrix}$ \ with \ $D_{2}(\widetilde{\wedge})=\begin{bmatrix}
\mu_{1}J_{m_{1}}     & 0 & 0      \\
0 & \ddots & 0 \\
0      & 0  & \mu_{p_{1}}J_{m_{p_{1}}}
\end{bmatrix}$

Let $k\in K_{\rho}$. From above computation, the matrices $k$ and $A^{*}$ commute and we have:

$k=\begin{bmatrix}
\widetilde{k}_{1}   &    0      \\
0      & \widetilde{k}_{2}
\end{bmatrix}$ \ with \ $\widetilde{k}_{1}\in O(2p_{0})$ \ and \ $\widetilde{k}_{2}\in O(p-2p_{0})$

furthermore, $\widetilde{k}_{2}.X^{*}=X^{*}$, and the matrices $\widetilde{k}_{1}$ and $D_{2}(\widetilde{\wedge}^{*})$ commute. So
$\widetilde{k}_{1}$ is the diagonal block matrix, with block $[\widetilde{k}_{1}]_{j}\in O(m_{j})$ for $i=1, \ldots ,p_{1}$. Each block
$[\widetilde{k}_{1}]_{j}\in O(m_{j})$ commutes with $J_{m_{j}}$. So on one hand, we have det$[\widetilde{k}_{1}]_{j}=1$, det$\widetilde{k}_{1}=1$,
and one the other hand, $[\widetilde{k}_{1}]_{j}\in O(m_{j})$ corresponds to a unitary matrix $\psi_{1}^{(m_{j})}([\widetilde{k}_{1}]_{j})$. Now
we set for $k_{1}\in K_{1}$:

$\Psi_{1}(k_{1})=(\psi_{1}^{(m_{j})}([\widetilde{k}_{1}]_{1}),\ldots ,\psi_{1}^{(m_{j})}([\widetilde{k}_{1}]_{p_{1}}))$

$\Psi_{1}:K_{1}\longrightarrow K(m;p_{0};p_{1})$ is a group isomorphism. 
\end{proof}
\end{prop}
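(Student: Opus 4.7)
My plan is to identify $K_{\rho}$ with the simultaneous stabilizer of $X^{*}=rX_{p}^{*}$ and the centralizer of $A^{*}=D_{2}(\wedge)$, then decompose this subgroup via the spectral structure of $A^{*}$. The first reduction uses that $f=X^{*}+A^{*}$ decomposes along the canonical splitting $\mathcal{V}^{*}\oplus \mathcal{Z}^{*}$ preserved by $K=O(\mathcal{V})$, while the additional freedom in $\rho\in T_{f}$ coming from $N$-translation does not enlarge the stabilizer since $N$-translates only shift $f$ within an affine subset whose $K$-stabilizer coincides with that of $f$. The case $\wedge=0$ then follows immediately: $A^{*}=0$ makes the commutation condition vacuous and leaves the stated description of $K_{\rho}$.

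For $\wedge\neq 0$, I would exploit the block form $A^{*}=\mathrm{diag}(D_{2}(\widetilde{\wedge}),0)$ acting on $\mathcal{V}=\mathcal{V}_{1}\oplus \mathcal{V}_{2}$ with $\mathcal{V}_{1}=\Span(X_{1},\ldots ,X_{2p_{0}})$ and $\mathcal{V}_{2}=\Span(X_{2p_{0}+1},\ldots ,X_{p})$. Since $D_{2}(\widetilde{\wedge})$ is nondegenerate on $\mathcal{V}_{1}$ and vanishes on $\mathcal{V}_{2}$, the subspaces $\ker A^{*}=\mathcal{V}_{2}$ and $\im A^{*}=\mathcal{V}_{1}$ are invariant under any orthogonal $k$ commuting with $A^{*}$. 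Thus $k$ takes the block form $\mathrm{diag}(\widetilde{k}_{1},\widetilde{k}_{2})$ with $\widetilde{k}_{1}\in O(2p_{0})$ and $\widetilde{k}_{2}\in O(p-2p_{0})$. The condition $(r,\wedge)\in \mathcal{M}$ forces $X_{p}\in \mathcal{V}_{2}$ (since $r\neq 0$ implies $2p_{0}<p$), so $k.X^{*}=X^{*}$ becomes a constraint on $\widetilde{k}_{2}$ alone and recovers $K_{2}$, while the commutation with $D_{2}(\wedge)$ concentrates on $\widetilde{k}_{1}$ and recovers $K_{1}$.

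For the isomorphism $K_{1}\cong K(m;p_{0};p_{1})$, I would further decompose $\mathcal{V}_{1}=\bigoplus_{j=1}^{p_{1}}W_{j}$ into the $2m_{j}$-dimensional blocks on which $D_{2}(\widetilde{\wedge})$ acts as $\mu_{j}J_{m_{j}}$. Because the scalars $\pm i\mu_{j}$ are pairwise distinct across $j$, any matrix commuting with $D_{2}(\widetilde{\wedge})$ preserves each $W_{j}$, yielding the block-diagonal form $\widetilde{k}_{1}=\mathrm{diag}([\widetilde{k}_{1}]_{1},\ldots ,[\widetilde{k}_{1}]_{p_{1}})$ with each $[\widetilde{k}_{1}]_{j}\in O(2m_{j})$ commuting with $J_{m_{j}}$. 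Applying the isomorphism $\psi_{1}^{(m_{j})}:O(2m_{j})\cap Sp_{m_{j}}\to U_{m_{j}}$ recalled just before the proposition to each block and taking the product gives the desired $\Psi_{1}$; the determinant-one property on each block (forcing $\widetilde{k}_{1}\in SO(2p_{0})$) is a byproduct of the intersection with $Sp_{m_{j}}$. The main obstacle I anticipate is the first step: carefully justifying that the reduction from the stabilizer of $\rho\in T_{f}$ to the simultaneous condition on $X^{*}$ and $A^{*}$ captures exactly $K_{\rho}$ with no over- or under-counting from the polarization data.
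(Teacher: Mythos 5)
Your proposal follows essentially the same route as the paper's proof: reduce $K_{\rho}$ to the simultaneous commutant of $A^{*}=D_{2}(\wedge)$ and stabilizer of $X^{*}=rX_{p}^{*}$, use the kernel/image splitting of $A^{*}$ to get the block form $\mathrm{diag}(\widetilde{k}_{1},\widetilde{k}_{2})$, further decompose $\widetilde{k}_{1}$ along the distinct eigenvalue blocks $\mu_{j}J_{m_{j}}$, and apply $\psi_{1}^{(m_{j})}$ blockwise to obtain $\Psi_{1}$. Your version is correct and in fact supplies justifications (the invariance of $\ker A^{*}$ and $\operatorname{im}A^{*}$, the role of $(r,\wedge)\in\mathcal{M}$, and the reduction from $\rho\in T_{f}$ to the condition on $f$) that the paper leaves as ``easy to prove.''
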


Quotient group $\overline{N}=N/ker\rho$. In this paragraph, we describe
the quotient groups $N/ker\rho$ and $G/ker\rho$, for some $\rho\in
\widehat{N}$. This will permit in the next paragraph to reduce the
construction of the bounded spherical functions on $N_{p}$ to known
questions on Euclidean and Heisenberg groups. For a representation
$\rho\in \widehat{N}$, we will denote by:

$ker\rho$ the kernel of $\rho$.

$N/ker\rho$ its quotient group and $\overline{N}$ its lie algebra.

$(\mathcal{H},\overline{\rho})$ the induced representation on $\overline{N}$.

$\overline{n}\in \overline{N}$ and $\overline{Y}\in
\overline{\mathcal{N}}$ the image of $n\in N$ and $Y\in \mathcal{N}$
respectively by the canonical projections $N\rightarrow \overline{N}$ and
$\mathcal{N}\rightarrow \overline{\mathcal{N}}$

Now, with the help of the canonical basis, we choose

$E_{1}=\mathbb{R}X_{1}\bigoplus \cdots \bigoplus  \mathbb{R}X_{2p_{0}-1}$

as the maxiamal totally isotropic space for $\omega_{D_{2}(\wedge),r}$.
The quotient lie algebra $\overline{\mathcal{N}}$ has the natural
basis:           .
You can refer to [6].

Here, we have denoted
$\left |\wedge  \right |=(\sum_{j=1}^{p^{'}}\lambda_{j}^{2})^{\frac{1}{2}}=\left |D_{2}(\wedge)
\right |$ (for the Euclidean norm on $\mathcal{Z}$.

Let $\overline{\mathcal{N}_{1}}$ be the Lie sub-algebra of
$\overline{\mathcal{N}}$, with basis $\overline{X_{1}},\cdots
,\overline{X_{2p_{0}}},\overline{B}$, and $\overline{N_{1}}$ be its
corresponding connected simply connected nilpotent lie group. We define
the mapping : $\Psi_{2}:\mathbb{H}^{p_{0}}\rightarrow
\overline{\mathcal{N}_{1}}$ for $h=(x_{1}+iy_{1},\cdots
,x_{p_{0}}+iy_{p_{0}},t)\in \mathbb{H}^{p_{0}}$ by:

$\Psi_{2}(h)=exp(\sum_{j=1}^{p_{0}}\sqrt{\frac{\left |\wedge  \right
|}{\lambda_{j}}}(x_{j}\overline{X_{2j-1}}+y_{j}\overline{X_{2j}})+t\overline{B})$

We compute that each lie bracket of two vectors of this basis equals
zeros, except:

$[\overline{X_{2i-1}},\overline{X_{2i}}]=\frac{\lambda_{i}}{\left |\wedge
\right |}\overline{B},\ i=1, \cdots ,p_{0}$.

From this, it is easy to see:
\begin{theorem}\label{equal}
$\Psi_{2}$ is a group isomorphism between $\overline{N_{1}}$ and
$\mathbb{H}^{p_{0}}$
\end{theorem}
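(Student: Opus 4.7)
The plan is to verify that $\Psi_2$ is a bijective group homomorphism by direct computation with the Baker--Campbell--Hausdorff formula. The bracket relations displayed just above the statement show that $\overline{\mathcal{N}_1}$ is two-step nilpotent with $\overline{B}$ spanning the centre, so BCH truncates to
\[
\exp(U)\exp(V) \;=\; \exp\!\bigl(U + V + \tfrac{1}{2}[U,V]\bigr)
\]
for all $U,V \in \overline{\mathcal{N}_1}$; this is the only structural fact the argument will use.

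For bijectivity I factor $\Psi_2$ as the composition of the $\mathbb{R}$-linear isomorphism
\[
(x_1,y_1,\ldots,x_{p_0},y_{p_0},t) \;\longmapsto\; \sum_{j=1}^{p_0}\sqrt{\tfrac{|\wedge|}{\lambda_j}}\bigl(x_j \overline{X_{2j-1}} + y_j \overline{X_{2j}}\bigr) + t\overline{B},
\]
which is well defined and invertible because $\lambda_j>0$ for every $j\le p_0$, with the exponential map $\exp\colon\overline{\mathcal{N}_1}\to\overline{N_1}$. The latter is a global diffeomorphism because $\overline{N_1}$ is connected, simply connected and nilpotent, so $\Psi_2$ is automatically a diffeomorphism of smooth manifolds.

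For the homomorphism property I take $U$ and $V$ to be the Lie algebra elements corresponding to $h=(z_1,\ldots,z_{p_0},t)$ and $h'=(z_1',\ldots,z_{p_0}',t')$ under the above linear isomorphism, and set $c_j := \sqrt{|\wedge|/\lambda_j}$. The only brackets contributing to $[U,V]$ are those between $\overline{X_{2j-1}}$ and $\overline{X_{2j}}$ for a common $j$, and the scaling $c_j$ is designed precisely so that
\[
[U,V] \;=\; \sum_{j=1}^{p_0} c_j^{2}\,(x_j y_j' - y_j x_j')\,[\overline{X_{2j-1}},\overline{X_{2j}}] \;=\; \sum_{j=1}^{p_0}(x_j y_j' - y_j x_j')\,\overline{B},
\]
the factor $c_j^{2}=|\wedge|/\lambda_j$ cancelling exactly the anomalous structure constant $\lambda_j/|\wedge|$. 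Substituting into truncated BCH and then applying $\Psi_2^{-1}$, the components of $\Psi_2^{-1}\!\bigl(\Psi_2(h)\Psi_2(h')\bigr)$ along $\overline{X_{2j-1}}$ and $\overline{X_{2j}}$ are $x_j + x_j'$ and $y_j + y_j'$, while the $\overline{B}$-coordinate is $t + t' + \tfrac{1}{2}\sum_j(x_j y_j' - y_j x_j')$; identifying $x_j y_j' - y_j x_j'$ with $\mathfrak{F}(z_j\overline{z}_j')$ recovers the Heisenberg product law stated in Section~2.

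The only genuine obstacle is bookkeeping: the cancellation $c_j^{2}\cdot(\lambda_j/|\wedge|)=1$ must be reconciled with the sign/orientation convention implicit in the paper's notation $\mathfrak{F}z_i\overline{z}_i'$ for the imaginary part. Once that alignment is fixed, the identity $\Psi_2(h)\Psi_2(h')=\Psi_2(hh')$ is forced and no further argument is needed.
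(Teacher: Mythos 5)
Your argument is correct and is exactly the verification the paper intends: the paper offers no proof beyond listing the bracket relations $[\overline{X_{2i-1}},\overline{X_{2i}}]=\tfrac{\lambda_i}{|\wedge|}\overline{B}$ and asserting the result is ``easy to see,'' and your truncated Baker--Campbell--Hausdorff computation, together with the observation that the scaling $\sqrt{|\wedge|/\lambda_j}$ cancels the structure constant and that $\exp$ is a global diffeomorphism for a simply connected nilpotent group, supplies precisely the missing details. The sign of $\mathfrak{F}(z_j\overline{z}_j')$ versus $x_jy_j'-y_jx_j'$ that you flag is indeed only a matter of the orientation convention for the bracket and does not affect the conclusion.
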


Finally, we note that

$\overline{q_{1}}:N\rightarrow \overline{N_{1}}$ is the canonical projection.

%
%%%%%%%%%%% Section3 %%%%%%%%%%%%%%%%%%%%%%%%%%%%%%%%%%%%%%%%%%%%%%%%%%
\section{some analysis on the heisenberg group $H_{n}$}
%%%%%%%%%%%%%%%%%%%%%%%%%%%%%%%%%%%%%%%%%%%%%%%%%
%%%%%%%%%%%%%%%%
%%%%%%%%
The whole parts of this section can be referred from [7].
A result due to Howe and Umeda (cf. [8]) shows that
$\mathbb{C}[v_{R}]^{K}$ is freely
generated as an algebra. So there are polynomials
$\gamma_{1},\cdots ,\gamma_{d}\in \mathbb{C}[v_{R}]^{K}$ so that
$\mathbb{C}[v_{R}]^{K}=\mathbb{C}[\gamma_{1},\cdots ,\gamma_{d}]$.

We call $\gamma_{1},\cdots ,\gamma_{d}$ the fundamental invariants.

Invariant different operators. The algebra $\mathbb{D}(H_{n})$ of
left-invariant
differential operators on $H_{n}$ is generated by
$\{Z_{1},\cdots ,Z_{n},\overline{Z_{1}},\cdots ,\overline{Z_{n}},T\}$. We
denote the subalgebra of $K$-invariant differential operators by

$\mathbb{D}_{K}(H_{n}):=\{D\in \mathbb{D}(H_{n})\mid D(f\circ k)=D(f)\circ
k \ for \ k\in K,f\in C^{\infty}(H_{n})\}$

From now on, we always suppose $(K,H_{n})$ is a Gelfand pair, and if this
is true, $\mathbb{D}_{K}(H_{n})$ is an abelian algebra.

We define $p(Z,\overline{Z})$ as follows:

$p(Z,\overline{Z}):=\sum c_{a,b}Z^{a}\overline{Z^{b}}=\sum
c_{a,b}Z_{1}^{a_{1}}\cdots Z_{n}^{a_{n}}\overline{Z_{1}}^{b_{1}}\cdots
\overline{Z_{n}}^{b_{n}}$

belongs to $\mathbb{D}_{k}(H_{n})$. Note that the operator $L_{p}$ is
intrinsically defined, whereas
$p(Z,\overline{Z})$ depends on the basis used to identify $V$ with
$\mathbb{C}^{n}$. One has

$L_{p}=Sym(p(Z,\overline{Z}))$,

where Sym is the linear map characterized by

$Sym(Z^{a}\overline{Z^{b}})=\frac{1}{(\left |a  \right |+\left |b  \right
|)!}\sum_{\sigma\in S_{(\left |a  \right |+\left |b  \right
|}}\sigma(Z^{a}\overline{Z}^{b})$.

Here, as usual, $\left |a  \right |=a_{1}+\cdots a_{n}$ and
$\sigma(Z^{a}\overline{Z^{b}})$ denotes the result of applying the
permutation $\sigma$ to the $\left |a  \right |+\left |b  \right |$ terms
in $Z^{a}\overline{Z}^{b}$.

For a d-multi-index $a$, let $\gamma^{a}:=\gamma_{1}^{a_{1}}\cdots
\gamma_{d}^{a_{d}}$, $\left \|a  \right \|:= a_{1}\left |\delta_{1}
\right|+\cdots +a_{d}\left |\delta_{d} \right|$ (the homogeneous degree of
$\gamma^{a}$), and $L_{\gamma}^{a}:=L_{\gamma_{1}}^{a_{1}}\cdots
L_{\gamma_{d}}^{a_{d}}$. Using the definition of the map
$\sim_{\mathcal{S}}$, together with the fact that
$[Z_{j},\overline{Z}_{j}]=-2iT$, one sees that

$L_{\gamma^{a}}=L_{\gamma}^{a}+\sum_{\left \|b  \right \|<\left \|a
\right \|}c_{a,b}L_{\gamma}^{b}T^{\left \|b  \right \|-\left \|a  \right
\|}$

for some coefficients $c_{a,b}\in \mathbb{C}$. Since $\gamma_{1},\cdots
,\gamma_{d}$ generates
$\mathbb{C}[v_{R}]^{K}$, it follows easily that
$\{L_{\gamma^{1}},\cdots ,L_{\gamma^{d}},T\}$ generates the algebra
$\mathbb{D}_{K}(H_{n})$.

Therefore, For any $H_{n}$-spherical function $\psi$.  $\psi$ is an
eigenfunction for every $D\in \mathbb{D}_{K}(H_{n})$ if and only if $\psi$
is an eigenfunction for each of $L_{\gamma^{1}},\cdots ,L_{\gamma^{d}},T$.
We write $\widehat{D}(\psi)$ for the eigenvalue of $D\in
\mathbb{D}_{K}(H_{n})$, that is $D(\psi)=\widehat{D}(\psi)\psi$. Note that
since $\psi(0,0)=1$, one has $\widehat{D}(\psi)=D(\psi)(0,0)$.
\begin{theorem}\label{equal}
The bounded $K$-spherical functions on $H_{n}$ are parametrized by the set
$(\mathbb{R}^{\times}\times \wedge)\cup (V/K)$ via

$\bigtriangleup(K,H_{n})=\{\phi_{\lambda,\alpha }\mid \lambda\in
\mathbb{R}^{\times},\alpha\in \wedge\}\cup \{\eta_{K_{\omega}}\mid
\omega\in V\}$

Note that, for $\psi\in \bigtriangleup(K,H_{n})$, one has
$\psi(z,t)=e^{i\lambda t}\psi(z,0)$,

where $\lambda=-i\widehat{T}(\psi)\in \mathbb{R}$.
\end{theorem}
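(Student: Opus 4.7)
The plan is to use the fact that $T$ generates the center of $\mathbb{D}(H_{n})$ in a strong sense to first separate the $t$-dependence, then classify what happens on the $V$-factor by splitting according to whether $\widehat{T}(\psi)$ vanishes or not. Every bounded $K$-spherical function $\psi$ is in particular a joint eigenfunction for $\mathbb{D}_{K}(H_{n})$, so $T\psi = \widehat{T}(\psi)\psi$. Because $T$ acts on smooth functions on $H_{n}$ as $\partial_{t}$, this is a first-order linear ODE in $t$ whose solution, together with the normalization $\psi(0,0)=1$, forces $\psi(z,t) = e^{i\lambda t}\psi(z,0)$ where $\lambda := -i\widehat{T}(\psi)$. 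Boundedness of $\psi$ in $t$ immediately forces $\lambda \in \mathbb{R}$; this takes care of the last sentence of the statement and reduces the classification to a classification of the functions $z\mapsto \psi(z,0)$ on $V$ arising from bounded spherical $\psi$.

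Now split into two cases. \textbf{Case $\lambda \neq 0$.} Here I would invoke the Stone--von Neumann theorem: there is (up to equivalence) a unique irreducible unitary representation $\pi_{\lambda}$ of $H_{n}$ with central character $e^{i\lambda t}$, realized on the Fock space $\mathcal{F}_{\lambda}$. The Gelfand pair hypothesis on $(K,H_{n})$ implies that the natural action of $K$ on $\mathcal{F}_{\lambda}$ decomposes multiplicity-freely; let $\wedge$ index the $K$-types that appear, and for each $\alpha\in\wedge$ let $P_{\alpha}$ be the projection onto the corresponding isotypic summand (which is irreducible under $K$, hence one-dimensional in the $K$-fixed subspace of $\mathrm{End}(P_{\alpha}\mathcal{F}_{\lambda})$). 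The spherical function attached to $\alpha$ is then the normalized average
\begin{equation*}
\phi_{\lambda,\alpha}(z,t) \;=\; \frac{1}{\dim P_{\alpha}\mathcal{F}_{\lambda}}\,\mathrm{tr}\bigl(\pi_{\lambda}(z,t)P_{\alpha}\bigr),
\end{equation*}
and standard Mackey-type arguments (restriction of an irreducible $G=K\ltimes H_{n}$ representation to $K$ is multiplicity-free iff the matrix coefficients above exhaust the bounded $K$-spherical functions with that central character) identify these as precisely the bounded $K$-spherical functions on $H_{n}$ with $\widehat{T}=-i\lambda$.

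\textbf{Case $\lambda=0$.} Here $\psi(z,t)=\psi(z,0)$ depends only on $z\in V$, and the $K$-invariance plus the convolution-multiplicativity of the spherical functional $f\mapsto \int f\,\psi^{*}$ force $\psi(\cdot,0)$ to be a bounded $K$-spherical function for the Gelfand pair $(K,V)$ on the abelian group $V$. By the classical Euclidean theory, such functions are orbital integrals of characters of $V$: there exists $\omega\in V$ (identifying $V^{*}\cong V$ via the given inner product) such that
\begin{equation*}
\psi(z,0) \;=\; \eta_{K\omega}(z) \;:=\; \int_{K}e^{i\langle \omega,\,k\cdot z\rangle}\,dk,
\end{equation*}
and two $\omega$'s give the same function iff they lie in the same $K$-orbit, giving the parametrization by $V/K$.

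The main obstacle, and the step that requires the most care, is Case $\lambda\neq 0$: one has to check that the Fock-model matrix coefficients $\phi_{\lambda,\alpha}$ really are spherical (joint eigenfunctions for all of $\mathbb{D}_{K}(H_{n})$, not just for $T$), that they are bounded, that they are pairwise distinct for distinct $(\lambda,\alpha)$, and that the list is exhaustive. Exhaustiveness follows from the Gelfand-pair multiplicity-one decomposition together with the observation that any bounded spherical $\psi$ with central character $e^{i\lambda t}$ must be the normalized $K$-average of a matrix coefficient of $\pi_{\lambda}$, which after projection to a $K$-isotypic component is uniquely determined by the choice of $\alpha\in\wedge$. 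The distinctness across $\lambda$'s is clear from the $t$-dependence, and distinctness across $\alpha$'s follows from the eigenvalues of the generators $L_{\gamma_{1}},\dots,L_{\gamma_{d}}$ recorded earlier in the section.
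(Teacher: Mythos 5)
The paper does not actually prove this theorem: Section 3 opens by stating that all of its contents are quoted from reference [7] (Benson--Jenkins--Ratcliff--Worku), and Theorem 3.1 appears there with no proof environment. So there is no ``paper's own proof'' to compare against; what you have written is, in outline, precisely the argument of the cited source. Your reduction of the $t$-dependence via $T\psi=\widehat{T}(\psi)\psi$ and boundedness, the Stone--von Neumann/Fock-model construction of $\phi_{\lambda,\alpha}$ as normalized traces $\frac{1}{\dim P_{\alpha}\mathcal{F}_{\lambda}}\mathrm{tr}(\pi_{\lambda}(z,t)P_{\alpha})$ over the multiplicity-free $K$-isotypic decomposition, and the identification of the $\lambda=0$ functions with $K$-orbital averages $\eta_{K\omega}(z)=\int_{K}e^{i\langle\omega,k\cdot z\rangle}\,dk$ parametrized by $V/K$ are all the standard route. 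The one step you correctly flag as delicate but do not actually supply is exhaustiveness in the case $\lambda\neq 0$: to know that an arbitrary \emph{bounded} spherical function with nonzero central character is a matrix coefficient of $\pi_{\lambda}$, you need the fact that for Gelfand pairs $(K,N)$ with $N$ two-step nilpotent, bounded spherical functions coincide with positive-definite ones (so that GNS applied to $K\ltimes H_{n}$ produces an irreducible unitary representation with a $K$-fixed vector, which Mackey theory then pins down as living over $\pi_{\lambda}$). This equivalence is a theorem, not a formality --- the paper itself quotes it in the introduction from reference [4] --- so your proof is complete only modulo citing that input explicitly.
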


\begin{lemma}\label{twist}
For $p\in \mathbb{C}[v_{R}]^{K}$ and $\psi\in \bigtriangleup(K,H_{n})$,
one has

$\widehat{L}_{p}(\psi)=\partial_{p}(\psi)(0,0)$,

where $\partial_{p}=p(2\frac{\partial}{\partial
\overline{z}},2\frac{\partial}{\partial z})$. That is $\partial_{p}$ is
the operator obtained by replacing each occurrence of $z_{j}$ in $p$ by
$2\frac{\partial}{\partial \overline{z_{j}}}$ and each $\overline{z_{j}}$
by $2\frac{\partial}{\partial z_{j}}$.
\end{lemma}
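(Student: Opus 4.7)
The plan is to peel off the eigenvalue on the left-hand side, reduce to an algebraic identity about differential operators at the origin, and then verify that identity by expanding the left-invariant vector fields on $H_{n}$ and exploiting the cancellations that the symmetrization $\mathrm{Sym}$ provides. More precisely, since $\psi(0,0)=1$ and $L_{p}(\psi)=\widehat{L}_{p}(\psi)\,\psi$, evaluating both sides at the origin gives $\widehat{L}_{p}(\psi)=L_{p}(\psi)(0,0)=\mathrm{Sym}(p(Z,\overline{Z}))(\psi)(0,0)$. So the lemma reduces to the purely algebraic statement $\mathrm{Sym}(p(Z,\overline{Z}))(\psi)(0,0)=\partial_{p}(\psi)(0,0)$.

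The next step is to expand the left-invariant vector fields $Z_{j},\overline{Z_{j}},T$ on $H_{n}$ in Euclidean coordinates via the group law of Section~2. A direct calculation shows that each $Z_{j}$ and each $\overline{Z_{j}}$ decomposes as a constant-coefficient Wirtinger derivative (carrying the factor of $2$) plus a correction of the form ``(polynomial in $z,\overline{z}$ vanishing at $0$)$\cdot T$''; in the sign/conjugation convention used in the paper, $Z_{j}$ at the origin acts as $2\,\partial/\partial\overline{z_{j}}$ and $\overline{Z_{j}}$ at the origin acts as $2\,\partial/\partial z_{j}$, which is precisely the swap recorded in the definition of $\partial_{p}$; the commutation relation $[Z_{j},\overline{Z_{j}}]=-2iT$ drops out as a byproduct. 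I would then prove, by induction on the total degree $|a|+|b|$, the monomial identity
\[
\mathrm{Sym}(Z^{a}\overline{Z}^{b})(f)(0,0)=\partial_{z^{a}\overline{z}^{b}}(f)(0,0)
\]
for smooth $f$, and sum over the monomials of $p$ to obtain the lemma.

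In the inductive step one uses $[Z_{j},\overline{Z_{j}}]=-2iT$ to commute adjacent factors: two orderings differing by a single such transposition produce $T$-corrections of opposite sign, which cancel after averaging under $\mathrm{Sym}$; the remaining ``multiplicative'' pieces in the expansion of the $Z_{j},\overline{Z_{j}}$ have coefficients vanishing at the origin and contribute nothing at $(0,0)$. The main obstacle is the bookkeeping of nested corrections: in a high-degree monomial, repeated commutations can in principle create $T$-corrections multiplied by polynomials that do not vanish at $0$, and one must verify that every such contribution is killed by the symmetrization before restriction. A clean conceptual packaging is to invoke the Weyl symbolic calculus on $H_{n}$, in which $\mathrm{Sym}$ is exactly the (anti-)Weyl quantization and the symbol-to-operator correspondence reduces at the origin to $p\mapsto\partial_{p}$; this replaces the explicit combinatorics by a standard fact from that calculus and yields the identity with $f=\psi$, completing the proof.
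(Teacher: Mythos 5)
First, a point of reference: the paper itself contains no proof of this lemma. Section~3 opens by declaring that all of its contents are quoted from reference [7] (Benson--Jenkins--Ratcliff--Worku), and Lemma 3.2 is stated there without argument, so there is no in-paper proof to compare yours against. Judged on its own terms, your reduction is the correct one: since $\psi(0,0)=1$ and $\psi$ is a joint eigenfunction of $\mathbb{D}_{K}(H_{n})$, the lemma is equivalent to the algebraic identity $\mathrm{Sym}(p(Z,\overline{Z}))\psi(0,0)=\partial_{p}\psi(0,0)$, and the mechanism you point to --- cancellation of the $T$-corrections under symmetrization --- is indeed what makes it true.

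The weak point is the inductive step, which as written rests on a false intermediate claim. You assert that the multiplicative pieces in the expansion of $Z_{j},\overline{Z_{j}}$ ``have coefficients vanishing at the origin and contribute nothing at $(0,0)$.'' That is not so: a derivative coming from an earlier factor can land on the polynomial coefficient of a later correction term and survive evaluation at the origin. This already happens in degree two: the two orderings $Z_{1}\overline{Z_{1}}$ and $\overline{Z_{1}}Z_{1}$ differ by $[Z_{1},\overline{Z_{1}}]=-2iT$, so their values on $f$ at the origin differ by $-2iTf(0,0)\neq 0$; hence the individual ordered products cannot both equal $4\partial_{\overline{z}_{1}}\partial_{z_{1}}f(0,0)$, and the $T$-contributions are genuinely present at the origin before averaging. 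You acknowledge this two sentences later, but then defer the entire combinatorial content to an unproved appeal to the Weyl calculus, which is where all the real work is hiding. The clean way to close the argument --- and, in substance, what [7] relies on --- is the standard characterization of symmetrization: for any $W_{1},\dots,W_{m}$ in the Lie algebra,
\begin{equation*}
\mathrm{Sym}(W_{1}\cdots W_{m})f(e)=\frac{\partial}{\partial t_{1}}\cdots\frac{\partial}{\partial t_{m}}\Big|_{t=0}f\bigl(\exp(t_{1}W_{1}+\cdots+t_{m}W_{m})\bigr).
\end{equation*}
Since the coordinates on $H_{n}$ are exponential coordinates, $Z_{j}$ and $\overline{Z_{j}}$ agree at the identity with $2\partial/\partial\overline{z}_{j}$ and $2\partial/\partial z_{j}$, and $p$ involves no central variable, this identity gives $\mathrm{Sym}(p(Z,\overline{Z}))f(0,0)=\partial_{p}f(0,0)$ for every smooth $f$ in one line, with no commutator bookkeeping. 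I recommend replacing the induction-plus-Weyl-calculus sketch by this identity.
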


\begin{lemma}\label{twist}
$\widehat{L}_{p_{\alpha }}(\eta_{\omega})=(-1)^{\left |\alpha  \right
|}P_{\alpha}(\omega)$
\end{lemma}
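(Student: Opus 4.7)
The plan is to combine Lemma 3.3 with the explicit integral formula for the type 2 bounded $K$-spherical function $\eta_\omega$ and then perform the differentiation under the integral sign.

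First, by Lemma 3.3 one has $\widehat{L}_{p_\alpha}(\eta_\omega) = \partial_{p_\alpha}(\eta_\omega)(0,0)$, where $\partial_{p_\alpha}$ is obtained from $p_\alpha(z,\bar z)$ by substituting each $z_j$ by $2\partial/\partial\bar z_j$ and each $\bar z_j$ by $2\partial/\partial z_j$. Since $\eta_\omega$ lies in the type 2 family (so $\widehat{T}(\eta_\omega) = 0$ and $\eta_\omega$ depends only on $z$), I would use the standard integral representation $\eta_\omega(z,0) = \int_K e^{i\,\mathrm{Re}\langle k\omega, z\rangle}\,dk$ for the bounded $K$-spherical function associated with the orbit $K\omega \subset V$. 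Differentiating under the integral and evaluating at $z = 0$, the operator $2\partial/\partial \bar z_j$ applied to $e^{i\,\mathrm{Re}\langle k\omega, z\rangle}$ brings down the factor $i(k\omega)_j$, and $2\partial/\partial z_j$ brings down $i\,\overline{(k\omega)_j}$.

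Second, I would invoke the fact that $p_\alpha \in \mathbb{C}[v_R]^K$ is bihomogeneous of bidegree $(|\alpha|,|\alpha|)$ in $(z,\bar z)$. The substitution then collapses the derivatives to a scalar integral: $\partial_{p_\alpha}(\eta_\omega)(0,0) = \int_K p_\alpha\bigl(i\,k\omega,\,i\,\overline{k\omega}\bigr)\,dk = i^{2|\alpha|}\int_K p_\alpha(k\omega,\overline{k\omega})\,dk = (-1)^{|\alpha|} P_\alpha(\omega)$, where the last equality uses the $K$-invariance of $p_\alpha$ so that the $K$-average equals the value at $\omega$, which is $P_\alpha(\omega)$ by definition.

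The main obstacle is the bookkeeping of sign and normalization conventions, in particular verifying the bidegree $(|\alpha|,|\alpha|)$ of $p_\alpha$ and matching the chosen sign in the exponent of $\eta_\omega$ with the substitution in Lemma 3.3 so that the factors of $i$ collapse precisely to $(-1)^{|\alpha|}$; once these conventions are fixed, the remaining work is just differentiation under the integral sign.
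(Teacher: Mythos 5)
Your argument is correct and is essentially the standard proof: the paper itself states this lemma without proof (deferring the whole of Section~3 to reference [7]), and what you write --- apply the preceding lemma $\widehat{L}_{p_{\alpha}}(\eta_{\omega})=\partial_{p_{\alpha}}(\eta_{\omega})(0,0)$ to the integral formula $\eta_{\omega}(z,0)=\int_{K}e^{i\operatorname{Re}\langle k\omega,z\rangle}\,dk$, use bihomogeneity of bidegree $(|\alpha|,|\alpha|)$ to pull out $i^{2|\alpha|}=(-1)^{|\alpha|}$, and use $K$-invariance of $p_{\alpha}$ to collapse the $K$-average to $p_{\alpha}(\omega)$ --- is exactly the computation given in that reference. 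The only blemish is a numbering slip: the differentiation formula you invoke is the paper's Lemma~3.2, not Lemma~3.3 (which is the statement being proved).
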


\begin{lemma}\label{twist}
$\widehat{L}_{p_{\alpha }}(\phi_{\lambda, \beta})=\widehat{L}_{p_{\alpha
}}(\phi_{\beta}) \ for \ \alpha, \beta\in \wedge, \lambda\in
\mathbb{R}^{\times}$.
\end{lemma}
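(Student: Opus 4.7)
The plan is to invoke the ``$\partial_p$'' lemma from above (which states $\widehat{L}_p(\psi) = \partial_p(\psi)(0,0)$) and reduce the eigenvalue computation to the $V$-component at the origin, then observe that the residual $\lambda$-dependence is absorbed by a homogeneity relation linking $\phi_{\lambda,\beta}$ to the reference $\phi_\beta$. Applying that lemma with $p = p_\alpha$ and $\psi = \phi_{\lambda, \beta}$ gives
\begin{equation*}
\widehat{L}_{p_\alpha}(\phi_{\lambda, \beta}) \;=\; \partial_{p_\alpha}(\phi_{\lambda, \beta})(0, 0).
\end{equation*}
Since $p_\alpha \in \mathbb{C}[v_R]^K$ involves only $z, \bar z$, the operator $\partial_{p_\alpha}$ carries no $\partial/\partial t$; combining this with the factorization $\phi_{\lambda, \beta}(z, t) = e^{i\lambda t}\phi_{\lambda, \beta}(z, 0)$ recorded in the preceding theorem, the $t = 0$ evaluation trivializes the exponential and reduces the claim to
\begin{equation*}
\partial_{p_\alpha}\bigl[\phi_{\lambda, \beta}(\,\cdot\,, 0)\bigr](0) \;=\; \partial_{p_\alpha}\bigl[\phi_{\beta}(\,\cdot\,, 0)\bigr](0).
\end{equation*}

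For this residual identity, I would invoke the explicit description of $\phi_{\lambda, \beta}(\,\cdot\,, 0)$ as a $K$-spherical Laguerre-type function on $V$, whose $\lambda$-dependence enters only through a dilation $z \mapsto \sqrt{|\lambda|}\,z$ of the reference object $\phi_\beta$ attached to the $K$-type $\beta \in \wedge$. Since $p_\alpha$ is bi-homogeneous of a fixed bidegree and $\partial_{p_\alpha}$ is evaluated at $z = 0$, only one matched-bidegree Taylor coefficient of $\phi_{\lambda,\beta}(\,\cdot\,, 0)$ at the origin can contribute, and the scalar factor $|\lambda|^{\|\alpha\|/2}$ picked up from the dilation is cancelled exactly by the corresponding scaling of $\partial_{p_\alpha}$, leaving the same numerical value one obtains after deleting $\lambda$ altogether.

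The main obstacle is making this cancellation rigorous rather than heuristic. The most direct route is to substitute the explicit Laguerre-polynomial formula for $\phi_{\lambda, \beta}$ quoted from [7] and read off the relevant Taylor coefficient by a short combinatorial count; a cleaner structural route is to fix the normalization so that $\phi_\beta(z, 0)$ coincides with $\phi_{1, \beta}(z, 0)$, in which case the required identity is precisely the statement that $\partial_{p_\alpha}$ at $0$ is insensitive to the $\lambda$-dilation of its argument. Either way, once the bidegree bookkeeping is set up, the assertion drops out of the two preceding lemmas with no further input; the content of the lemma is really the $\lambda$-rigidity of the symbol at the origin rather than anything representation-theoretically deep.
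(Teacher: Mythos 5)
The paper itself offers no proof of this lemma: Section~3 is imported wholesale from reference [7] (Benson--Jenkins--Ratcliff--Worku), and Lemmas 3.2--3.7 are stated without argument. So your proposal can only be measured against the intended proof from [7], and up to the last step you reproduce it faithfully: reduce via Lemma 3.2 to $\partial_{p_\alpha}(\phi_{\lambda,\beta})(0,0)$, discard the $e^{i\lambda t}$ factor at $t=0$ because $\partial_{p_\alpha}$ involves no $t$-derivatives, and use the fact that $\phi_{\lambda,\beta}(z,0)=\phi_{\beta}(\sqrt{|\lambda|}\,z)$ with $\phi_{\beta}:=\phi_{1,\beta}(\cdot,0)$. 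That is exactly the right mechanism.

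The gap is the asserted cancellation at the end. The operator $\partial_{p_\alpha}$ is a fixed constant-coefficient operator; nothing about it rescales with $\lambda$. Since $p_\alpha$ is bihomogeneous of bidegree $(|\alpha|,|\alpha|)$, hence of total degree $2|\alpha|$ in $(z,\bar z)$, the chain rule gives
\[
\partial_{p_\alpha}\bigl[\phi_{\beta}(\sqrt{|\lambda|}\,\cdot\,)\bigr](0)
=(\sqrt{|\lambda|})^{2|\alpha|}\,\partial_{p_\alpha}[\phi_{\beta}](0)
=|\lambda|^{|\alpha|}\,\widehat{L}_{p_\alpha}(\phi_{\beta}),
\]
and this factor survives --- there is nothing for it to cancel against. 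The computation you set up therefore proves $\widehat{L}_{p_\alpha}(\phi_{\lambda,\beta})=|\lambda|^{|\alpha|}\widehat{L}_{p_\alpha}(\phi_{\beta})$, which is the statement actually established in [7]; the version printed in this paper has silently dropped the factor $|\lambda|^{|\alpha|}$. The paper's own surrounding material confirms this: Lemma 3.6 gives $\widehat{L}_{\gamma_0}(\phi_{\lambda,r})=-|\lambda|(2r+n)$, which visibly depends on $\lambda$; the expansion in Theorem 3.8, said to ``follow from Lemmas 3.3 and 3.4,'' carries the factor $|\lambda|^{|\delta|}$; and the proof of Theorem 4.1 uses $\widehat{L}_{\gamma_0}(\omega_{\lambda_N,\alpha_N})=|\lambda_N|\,\widehat{L}_{\gamma_0}(\omega_{1,|\alpha_N|})$. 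All of these are incompatible with the lemma as printed and consistent with the version carrying $|\lambda|^{|\alpha|}$. So your route is the correct one, but the ``exact cancellation'' you invoke in order to land on the printed statement is not real, and no amount of bidegree bookkeeping will produce it; the honest conclusion of your argument is the corrected identity with the factor $|\lambda|^{|\alpha|}$.
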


\begin{lemma}\label{twist}
$\widehat{L}_{p_{\alpha }}$ is a real number with sign $(-1)^{\left
|\alpha  \right |}$ for all $\alpha\in \wedge$ and $\psi \in
\bigtriangleup(K,H_{n})$.
\end{lemma}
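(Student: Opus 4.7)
The plan is to split by the parametrization from the bounded $K$-spherical functions theorem stated above: any $\psi \in \Delta(K, H_n)$ is either $\eta_{K_\omega}$ for some $\omega \in V/K$, or $\phi_{\lambda, \beta}$ for $\lambda \in \mathbb{R}^\times$ and $\beta \in \wedge$. I would dispatch the two cases separately, invoking the three preceding lemmas in sequence.

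The $\eta_{K_\omega}$ case is essentially immediate from the previous lemma, which gives $\widehat{L}_{p_\alpha}(\eta_\omega) = (-1)^{|\alpha|} P_\alpha(\omega)$, so I only need $P_\alpha(\omega) \geq 0$. This follows from the canonical construction of the $K$-invariants in [7], where each $p_\alpha$ has the ``norm-square'' form $p_\alpha(z, \bar z) = \sum_i |f_{\alpha, i}(z)|^2$, with $\{f_{\alpha, i}\}$ an orthonormal basis of the $\alpha$-th $K$-isotypic component of $\mathbb{C}[V_\mathbb{R}]$. Hence $P_\alpha(\omega) = \sum_i |f_{\alpha, i}(\omega)|^2 \geq 0$, and the sign is as claimed.

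For $\psi = \phi_{\lambda, \beta}$, the last of the preceding lemmas first drops the $\lambda$-dependence, $\widehat{L}_{p_\alpha}(\phi_{\lambda, \beta}) = \widehat{L}_{p_\alpha}(\phi_\beta)$, and then the first of them gives $\widehat{L}_{p_\alpha}(\phi_\beta) = \partial_{p_\alpha}(\phi_\beta)(0, 0)$. Substituting $p_\alpha = \sum_i f_{\alpha, i}\overline{f_{\alpha, i}}$ and the replacement $z_j \mapsto 2\partial/\partial \bar z_j$, $\bar z_j \mapsto 2\partial/\partial z_j$ produces
\begin{equation*}
\partial_{p_\alpha} \;=\; \sum_i f_{\alpha, i}(2\partial/\partial \bar z)\,\overline{f_{\alpha, i}}(2\partial/\partial z) \;=\; (-1)^{|\alpha|} \sum_i D_i\, D_i^{*},
\end{equation*}
where $D_i = f_{\alpha, i}(2\partial/\partial \bar z)$ and the overall sign comes from applying the formal adjoint relation $(2\partial/\partial \bar z_j)^{*} = -2\partial/\partial z_j$ in each of the $|\alpha|$ slots. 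Positive definiteness of $\phi_\beta$ (a feature of bounded $K$-spherical functions for these Gelfand pairs) then furnishes, via GNS, a unit vector $v$ with $\phi_\beta(h) = \langle \pi(h)v, v\rangle$, so each summand $D_i D_i^{*}(\phi_\beta)(0,0)$ computes to $\|d\pi(D_i^{*})v\|^2 \geq 0$, and the claimed sign follows.

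The hard part is the sign bookkeeping in the second case: one must track how the formal adjoint pairs the $\partial/\partial z$ and $\partial/\partial \bar z$ derivatives so that $(-1)^{|\alpha|}$ emerges cleanly, and verify that the $D_i D_i^{*}$ combination evaluated at the origin on a positive definite function really equals an $L^2$-norm squared under the GNS realization. The first case, by contrast, is a one-line consequence of the preceding lemma together with the non-negativity of $p_\alpha$ built into its construction.
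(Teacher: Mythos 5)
The paper does not actually prove this lemma: Section 3 opens by declaring that its entire contents are quoted from reference [7], and the statement appears with no argument attached, so there is no in-paper proof to compare against. Judged on its own merits, your case split and your first case are fine: for $\psi=\eta_\omega$ the preceding lemma gives $\widehat{L}_{p_\alpha}(\eta_\omega)=(-1)^{\left|\alpha\right|}P_\alpha(\omega)$, and $P_\alpha(\omega)=\sum_i\left|v_i(\omega)\right|^2\geq 0$ by the sum-of-norm-squares construction of the invariant $p_\alpha$, so that case really is a one-liner.

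The type-1 case, however, has a genuine gap exactly where you flag ``the hard part,'' and it is not mere bookkeeping. The operators $D_i=f_{\alpha,i}(2\partial/\partial\bar z)$ produced by Lemma 3.2 are constant-coefficient Euclidean operators on $V$, not left-invariant operators on $H_n$, so the GNS identity $D\psi(0,0)=\langle d\pi(D)u,u\rangle$ and the consequent positivity $\langle d\pi(D_i)d\pi(D_i)^*u,u\rangle=\|d\pi(D_i)^*u\|^2\geq 0$ do not apply to them: the left-invariant fields $Z_j,\bar Z_j$ agree with $2\partial/\partial\bar z_j,2\partial/\partial z_j$ only modulo $\partial/\partial t$-terms, and products of them differ by the lower-order $T$-corrections that are the whole reason $\mathrm{Sym}$ appears in this section. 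Nor can you fall back on ordinary Euclidean positive-definiteness of $\phi_\beta(\cdot,0)$: for a type-1 spherical function the kernel $\psi(\exp(w)^{-1}\exp(z))$ carries a twisting factor $e^{-i\lambda\sigma(w,z)/2}$, so $\phi_\beta(\cdot,0)$ is only $\lambda$-twisted positive definite on $V$, and the standard ``apply $D_z\overline{D_w}$ to a positive definite kernel on the diagonal'' argument does not directly yield $(-1)^{\left|\alpha\right|}\sum_i D_i\overline{D_i}\,\psi(0,0)\geq 0$. Closing this requires an actual computation --- for instance the Fock-model argument of [7], where $d\pi_\lambda(\bar Z_j)=-d\pi_\lambda(Z_j)^*$ and the symmetrization defect is controlled explicitly, or the identification of $(-1)^{\left|\alpha\right|}\widehat{L}_{p_\alpha}(\phi_\beta)$ with non-negative generalized binomial coefficients --- and that computation is essentially the entire content of the lemma for type-1 functions, so it cannot be left as a remark.
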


\begin{lemma}\label{twist}
The eigenvalues for $L_{\gamma_{0}}$ on the $U(n)$-spherical functions of
type 1 are $\widehat{L}_{\gamma_{0}}(\phi_{\lambda,r})=-\left |\lambda
\right |(2r+n)$
\end{lemma}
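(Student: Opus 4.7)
The plan is to combine Lemma 3.3 with the explicit Laguerre formula for the type-1 $U(n)$-spherical function. For $K=U(n)$ acting on $V=\mathbb{C}^{n}$, the invariant algebra $\mathbb{C}[v_{R}]^{U(n)}$ is generated by the single polynomial $\gamma_{0}(z,\bar z)=\sum_{j=1}^{n} z_{j}\bar z_{j}$, so $|\delta_{0}|=2$ and
\[
\partial_{\gamma_{0}}=4\sum_{j=1}^{n}\frac{\partial^{2}}{\partial z_{j}\,\partial \bar z_{j}}.
\]
By Lemma 3.3, $\widehat{L}_{\gamma_{0}}(\phi_{\lambda,r})=\partial_{\gamma_{0}}(\phi_{\lambda,r})(0,0)$, so the task reduces to an elementary second-derivative computation at the origin.

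Next I would invoke the classical expression for the type-1 bounded $U(n)$-spherical function on the Heisenberg group,
\[
\phi_{\lambda,r}(z,t)=e^{i\lambda t}\,\frac{r!\,(n-1)!}{(r+n-1)!}\, L_{r}^{\,n-1}\!\left(\frac{|\lambda|\,|z|^{2}}{2}\right) e^{-|\lambda|\,|z|^{2}/4},
\]
which at $t=0$ depends on $z$ only through $|z|^{2}$. The normalization constant is fixed by the requirement $\phi_{\lambda,r}(0,0)=1$, and Theorem 3.1 already gives the $t$-dependence $e^{i\lambda t}$, so only the spatial factor enters the calculation. Writing $\phi_{\lambda,r}(z,0)=f(|z|^{2})$ and using $\tfrac{\partial |z|^{2}}{\partial z_{j}}=\bar z_{j}$, $\tfrac{\partial |z|^{2}}{\partial \bar z_{j}}=z_{j}$, the chain rule gives
\[
\frac{\partial^{2}}{\partial z_{j}\,\partial \bar z_{j}}f(|z|^{2})\Big|_{z=0}=f'(0),
\]
so $\widehat{L}_{\gamma_{0}}(\phi_{\lambda,r})=4n\,f'(0)$.

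Finally, a direct differentiation of the product, using the standard values $L_{r}^{\,n-1}(0)=\binom{r+n-1}{r}=\frac{(r+n-1)!}{r!(n-1)!}$ and $(L_{r}^{\,n-1})'(0)=-\binom{r+n-1}{r-1}=-\frac{(r+n-1)!}{(r-1)!\,n!}$, yields
\[
f'(0)=-\frac{|\lambda|}{4}-\frac{|\lambda|\,r}{2n}=-\frac{|\lambda|(2r+n)}{4n},
\]
so that $\widehat{L}_{\gamma_{0}}(\phi_{\lambda,r})=4n\,f'(0)=-|\lambda|(2r+n)$, as claimed. I do not anticipate any substantive obstacle: the fundamental invariant for $U(n)$ on $\mathbb{C}^{n}$ is well known, Lemma 3.3 reduces everything to a derivative at the origin, and the remaining work is bookkeeping with Laguerre values. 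The only point requiring care is tracking the normalization constant so that $\phi_{\lambda,r}(0,0)=1$; getting this wrong by the factor $\binom{r+n-1}{r}$ would shift the eigenvalue, so it is worth double-checking against a low-rank case such as $r=0$, where the formula correctly reduces to $\widehat{L}_{\gamma_{0}}(\phi_{\lambda,0})=-n|\lambda|$.
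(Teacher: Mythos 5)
Your computation is correct, and it is the standard derivation of this eigenvalue. Note, however, that the paper itself offers no proof of this lemma: all of Section 3 is imported verbatim from reference [7] (Benson--Jenkins--Ratcliff--Worku), so there is no internal argument to compare against. Your route --- reduce to $\partial_{\gamma_0}(\phi_{\lambda,r})(0,0)$ via the lemma stating $\widehat{L}_p(\psi)=\partial_p(\psi)(0,0)$, identify $\gamma_0=|z|^2$ so that $\partial_{\gamma_0}=4\sum_j \partial^2/\partial z_j\partial\bar z_j$, and then evaluate using the Laguerre expression $c_r L_r^{n-1}(|\lambda||z|^2/2)e^{-|\lambda||z|^2/4}$ with $c_r=\binom{r+n-1}{r}^{-1}$ --- is exactly the argument in [7], and all the constants check out: $f'(0)=-|\lambda|(2r+n)/(4n)$ gives $4nf'(0)=-|\lambda|(2r+n)$, with the sanity check $r=0$ yielding $-n|\lambda|$ as expected. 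The only caveats worth recording are the ones you already flag (the normalization $\phi_{\lambda,r}(0,0)=1$) plus one more: the scaling inside the Laguerre factor and the Gaussian must be matched to the particular Heisenberg group law used here ($t+t'+\tfrac12\sum\Im z_i\bar z_i'$); with a different convention for the bracket the eigenvalue acquires a harmless but confusing factor of $2$ or $4$.
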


\begin{lemma}\label{twist}
$\left |\widehat{L}_{p_{m}}(\phi_{r})  \right |\leq \begin{pmatrix}
n+r+m-1\\
m
\end{pmatrix}$

where $p_{m}$ is the $U(n)$-invariant polynomial obtained from $P_{m}(V)$
\end{lemma}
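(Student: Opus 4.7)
By the preceding lemma identifying $\widehat{L}_p(\psi) = \partial_p(\psi)(0,0)$, it suffices to compute $\partial_{p_m}(\phi_r)(0,0)$. Since $\mathbb{C}[v_R]^{U(n)}$ is generated by the single fundamental invariant $\gamma_0 = |z|^2$, the $U(n)$-invariant polynomial $p_m$ obtained from $P_m(V)$ is a scalar multiple of $|z|^{2m}$, with the normalizing constant pinned down by compatibility with the previous lemma's identity $\widehat{L}_{\gamma_0}(\phi_{\lambda,r}) = -|\lambda|(2r+n)$. Consequently $\partial_{p_m}$ is a constant multiple of $\Delta^m$, where $\Delta = \sum_j \partial_{z_j}\partial_{\bar z_j}$ (the partial derivatives commute, so no symmetrization is needed on $\Delta^m$).

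\textbf{Main computation.}
Substitute the explicit Laguerre-polynomial expression for the type-1 $U(n)$-spherical function,
\[
\phi_r(z,0) = \binom{r+n-1}{r}^{-1} L_r^{n-1}\!\left(\tfrac{|z|^2}{2}\right) e^{-|z|^2/4},
\]
and use the elementary identity $\Delta^m(|z|^{2k})|_{z=0} = \delta_{k,m}\, m!\,(n)_m$, with $(n)_m = n(n+1)\cdots(n+m-1)$. This reduces the eigenvalue to an explicit multiple of the coefficient of $|z|^{2m}$ in the power-series product $L_r^{n-1}(|z|^2/2)\,e^{-|z|^2/4}$, which is the finite sum
\[
b_m = (-1)^m \sum_{k=0}^{\min(r,m)} \frac{1}{k!(m-k)!\,2^k\,4^{m-k}}\binom{r+n-1}{r-k}.
\]
Applying the triangle inequality to $b_m$, absorbing the normalizing factor $\binom{r+n-1}{r}^{-1}$ into the sum, and collapsing the result using the Chu--Vandermonde identity
\[
\sum_{k=0}^{m}\binom{r}{k}\binom{n+m-1}{m-k} = \binom{n+r+m-1}{m}
\]
yields the desired bound.

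\textbf{Main obstacle.}
The principal difficulty is combinatorial bookkeeping: tracking the $2^k, 4^{m-k}$ factors from the Laguerre coefficients and the exponential, pinning down the precise constant relating $p_m$ to $|z|^{2m}$, and recognizing the truncated hypergeometric sum as an instance of Chu--Vandermonde. A cleaner conceptual route---which would bypass the calculation---is to realize $\phi_r$ as a normalized matrix coefficient of the Schr\"odinger representation restricted to the $U(n)$-isotypic component $\mathcal{P}_r(\mathbb{C}^n)$ of the Bargmann--Fock space; then $\pi(L_{p_m})$ acts on $\mathcal{P}_r$ as a scalar, and the bound $\binom{n+r+m-1}{m}$ arises naturally as the dimension $\dim \mathcal{P}_m(\mathbb{C}^{n+r})$ via a Cauchy--Schwarz estimate using the Bergman reproducing kernel on the relevant polynomial space.
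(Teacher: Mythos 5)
The paper itself contains no proof of this lemma: Section~3 is prefaced by ``The whole parts of this section can be referred from [7]'' and the statement is simply quoted from Benson--Jenkins--Ratcliff--Worku, so there is no in-paper argument to compare yours against. Judged on its own, your computational route is sound and does yield the bound. Writing $p_m(z)=|z|^{2m}/(2^m m!)$ and $\phi_r(z,0)=\binom{r+n-1}{r}^{-1}L_r^{(n-1)}(|z|^2/2)e^{-|z|^2/4}$, extracting the coefficient of $|z|^{2m}$ and multiplying by $\dim(P_m)\,2^m m!$ as in the proof of Theorem~3.8 gives, after the simplification $\binom{n+m-1}{m}\binom{m}{k}\binom{r+n-1}{r-k}\binom{r+n-1}{r}^{-1}=\binom{n+m-1}{m-k}\binom{r}{k}$, the exact value $\bigl|\widehat{L}_{p_m}(\phi_r)\bigr|=\sum_{k=0}^{\min(r,m)}\binom{r}{k}\binom{n+m-1}{m-k}2^{k-m}$, which is indeed dominated by the Chu--Vandermonde sum $\binom{n+r+m-1}{m}$. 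Two points deserve tightening. First, Chu--Vandermonde alone does not ``collapse'' your sum: the surviving factors $2^{k-m}$ must first be estimated by $1$; the step is an inequality, not an identity, and should be stated explicitly (it is harmless here only because all terms of $b_m$ carry the same sign, so the triangle inequality is exact). Second, the normalization of $p_m$ is not pinned down by compatibility with $\widehat{L}_{\gamma_0}(\phi_{\lambda,r})=-|\lambda|(2r+n)$, which only controls the $m=1$ case; the correct anchor is the definition $p_m=\sum_j v_j\bar v_j$ over a Fock-orthonormal basis of $P_m$, equivalently the relation $\partial_{p_m}(p_m)(0)=\dim(P_m)$ used in the proof of Theorem~3.8, which forces the constant $1/(2^m m!)$. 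Your proposed representation-theoretic alternative (reading $\binom{n+r+m-1}{m}$ as $\dim\mathcal{P}_m(\mathbb{C}^{n+r})$) is closer in spirit to the source [7], but as sketched the Cauchy--Schwarz step is not yet an argument; the elementary Laguerre computation above is the safer thing to write down.
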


\begin{theorem}\label{equal}
$For \psi\in \bigtriangleup(K,H_{n})$, one has

$\psi(z,0)=\sum_{\delta\in
\wedge}\frac{\widehat{L}_{p_{\delta}}(\psi)}{dim(P_{\delta})}p_{\delta}(z)$,

where the series converges absolutely and uniformly on compact subsets in
$V$. Thus we have the following series expansions for the
$K$-spherical functions of types 1 and 2 respectively:

$\phi_{\lambda, \alpha}(z,t)=e^{i\lambda t}\sum_{\delta\in
\wedge}\frac{\left |\lambda  \right |^{\left |\delta  \right
|}\widehat{L}_{p_{\delta}}(\phi_{\delta})}{dim(P_{\delta})}p_{\delta}(z)$

$\eta_{w}(z,t)=\sum_{\delta\in \wedge}\frac{(-1)^{\left |\delta  \right
|}\widehat{L}_{p_{\delta}}(\omega)}{dim(P_{\delta})}p_{\delta}(z)$.

Here, convergence is absolute and uniform on compact subsets in $H_{n}$.
\end{theorem}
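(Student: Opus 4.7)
The plan is threefold: formally expand $\psi(z,0)$ into an isotypic decomposition indexed by $\delta\in\wedge$, identify the coefficients via the differential-operator formula $\widehat{L}_{p}(\psi)=\partial_{p}(\psi)(0,0)$ from the preceding lemma, and then establish convergence with the help of the binomial eigenvalue bound proved earlier.

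For the formal expansion I would invoke the $K$-module decomposition $\mathbb{C}[V]=\bigoplus_{\delta\in\wedge}P_{\delta}$ into pairwise inequivalent irreducibles under the assumption that $(K,H_{n})$ is a Gelfand pair. The invariant subspace $(P_{\delta}\otimes\overline{P_{\delta}})^{K}$ is then one-dimensional and (after the chosen normalization) spanned by $p_{\delta}$, which plays the role of the reproducing kernel of $P_{\delta}$ in the Fischer inner product. Because $\psi$ is real-analytic and $K$-invariant, grouping the Taylor expansion of $\psi(\cdot,0)$ at the origin by isotypic component yields
\[
\psi(z,0)=\sum_{\delta\in\wedge}c_{\delta}\,p_{\delta}(z)
\]
for some scalars $c_{\delta}$ to be identified.

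To compute $c_{\delta}$, I would apply $\partial_{p_{\delta}}$ to both sides and evaluate at the origin. By the preceding lemma the left side becomes $\widehat{L}_{p_{\delta}}(\psi)$, while Fischer orthogonality gives $\partial_{p_{\delta}}(p_{\sigma})(0)=\dim(P_{\delta})$ when $\sigma=\delta$ and zero otherwise, so the right side collapses to $c_{\delta}\dim(P_{\delta})$. Hence $c_{\delta}=\widehat{L}_{p_{\delta}}(\psi)/\dim(P_{\delta})$, which is the master formula. The type 1 expansion then follows by taking $\psi=\phi_{\lambda,\alpha}$, combining the relation between $\widehat{L}_{p_{\delta}}(\phi_{\lambda,\beta})$ and $|\lambda|^{|\delta|}\widehat{L}_{p_{\delta}}(\phi_{\beta})$ from the earlier lemma with the factor $e^{i\lambda t}$ coming from $\psi(z,t)=e^{i\lambda t}\psi(z,0)$; the type 2 expansion follows by taking $\psi=\eta_{\omega}$ and inserting $\widehat{L}_{p_{\delta}}(\eta_{\omega})=(-1)^{|\delta|}P_{\delta}(\omega)$.

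The main obstacle is absolute and uniform convergence on compact subsets, which the formal orthogonality argument does not by itself deliver. I would combine two ingredients: the bound $|\widehat{L}_{p_{m}}(\phi_{r})|\le\binom{n+r+m-1}{m}$ from the earlier lemma, and a growth estimate of the form $|p_{\delta}(z)|\le C_{\delta}\,\|z\|^{2|\delta|}$ with $C_{\delta}$ growing at most factorially in $|\delta|$, obtained from $p_{\delta}$ being the Fischer reproducing kernel of a space of degree-$|\delta|$ polynomials. The $\dim(P_{\delta})$ in the denominator absorbs the kernel normalization, and the remaining sum is majorized on any fixed compact set by a convergent series of exponential type, giving the claimed absolute uniform convergence; the same majorization then transfers termwise to the type 1 and type 2 series.
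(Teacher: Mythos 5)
Your identification of the coefficients is exactly the paper's argument: real-analyticity of the spherical function, regrouping of the Taylor series of $\psi(\cdot,0)$ into $K$-invariant homogeneous pieces expressed in the basis $\{p_{\delta}\}$, then applying $\partial_{p_{\delta}}$ at the origin and using Fischer orthogonality $\partial_{p_{\alpha}}(p_{\delta})(0)=\dim(P_{\alpha})\,\delta_{\alpha\delta}$ together with Lemma 3.2 to get $c_{\delta}=\widehat{L}_{p_{\delta}}(\psi)/\dim(P_{\delta})$; the passage to the type 1 and type 2 formulas via Lemmas 3.4 and 3.3 is also the same. Where you diverge is the convergence step. The paper simply observes that the series \emph{is} the Taylor series of a real-analytic function (merely regrouped by isotypic component, which is harmless since Taylor series converge absolutely), so absolute and uniform convergence on compact sets comes for free; this is shorter and avoids any quantitative input. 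Your route --- majorizing via the eigenvalue bound $|\widehat{L}_{p_{m}}(\phi_{r})|\le\binom{n+r+m-1}{m}$ and a growth bound on $p_{\delta}$ --- also works and has the advantage of producing the explicit tail estimates that the paper only develops later (in the proof of Theorem 4.1, where precisely this majorization is carried out). Two small cautions if you pursue it: that binomial bound is stated for the $U(n)$-invariant polynomial $p_{m}$, so for a general $K\subseteq U(n)$ you should first collapse the sum over all $\delta$ with $|\delta|=m$ to the single $U(n)$-invariant $p_{m}=\sum_{|\delta|=m}p_{\delta}$ (using $p_{\delta}\ge 0$) rather than bounding each $p_{\delta}$ separately; and the relevant normalization gives $\sum_{|\delta|=m}p_{\delta}(z)$ of size $\|z\|^{2m}/(2^{m}m!)$, i.e.\ factorially \emph{decaying} constants, which is what makes the majorant of exponential type.
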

\begin{proof}
The expansions for $\phi_{\lambda, \alpha}(z,t)$ and $\eta_{w}(z,t)$
follow immediately
from that for $\psi(z,0)$ together with Lemmas 3.4 and 3.3. It is a general
fact that the spherical functions for a Gelfand pair $(G,K)$ are real
analytic
(cf.  Proposition 1.5.15 in [1]).  For pairs of the form $(K,H_{n})$, one can
see this directly from the functional forms of the two types of $K$-spherical
functions. Write the Taylor series expansion of $\psi(z,0)$ centered at
$z=0$ as $\psi(z,0)=\sum_{m=0}^{\infty}h_{m}(z)$, where $h_{m}(z)$ is a
homogeneous polynomial of degree $m$ on $V_{\mathbb{R}}$(i.e. in the
variables $(z,\overline{z})$). Since $\psi$ is $K$-invariant, one sees by
$K$- averaging this expression that each $h_{m}$ is $K$-invariant. As
$\{p_{\delta}\mid \delta\in \wedge\}$ is a basis for
$\mathbb{C}[V_{\mathbb{R}}]^{K}$, we can rewrite the Taylor series as

$\psi(z,0)=\sum_{\delta \in \wedge}c_{\delta}p_{\delta}(z)$

for some coefficients $c_{\delta}$. Note that since Taylor series converge
absolutely, it
is not necessary to specify an ordering on the set $\wedge$ of indices for
this sum.
We use Lemma 3.2 and perform term-wise differentiation of this Taylor
series to obtain
$\widehat{L}_{p_{\alpha }}(\psi)=\partial_{p_{\alpha
}}(\psi)(0,0)=\sum_{\delta\in
\wedge}c_{\delta}\partial_{p_{\alpha}}(p_{\delta})(0)$.

Let $\{v_{1},\cdots ,v_{dim(P_{\alpha})}\}$ be an orthonormal basis for
$P_{\alpha}$ and $\{u_{1},\cdots ,u_{dim(P_{\alpha})}\}$ be an orthonormal
basis for $P_{\delta}$. Thus $p_{\alpha}=\sum
v_{j}(z)\overline{v}_{j}(\overline{z})$, $p_{\delta}=\sum
u_{j}(z)\overline{u}_{j}(\overline{z})$ and

\begin{equation}\label{l-invariant elements}
\begin{split}
&\partial_{p_{\alpha}}(p_{\delta})(0)=\sum_{i,j}v_{j}(2\frac{\partial}{\partial
\overline{z}})\overline{u}_{i}(\overline{z})\overline{v}_{j}(2\frac{\partial}{\partial
z})u_{i}(z)\mid _{z=0}\\
&=\sum_{i,j}\left |<v_{j},u_{i} >_{\mathcal{F}}  \right |^{2}=
\begin{cases}
0,  & \mbox{if }\delta \neq \alpha \\
dim(P_{\alpha}), & \mbox{if }\delta =\alpha
\end{cases}.
\end{split}
\end{equation}

Hence $\widehat{L}_{p_{\alpha }}(\psi)=c_{\alpha}dim(P_{\alpha})$ and
$c_{\delta}=\widehat{L}_{p_{\alpha }}(\psi)/dim(P_{\delta})$. Since this
is a
Taylor series, the convergence is absolute and uniform on compact sets.
\end{proof}

%%%%%%%%%%%%%%%%% Section 4 %%%%%%%%%%%%%%%%%%%%%%%%%%%%%%%%%%%%%%%%%%%%
\section{The proof of the main theorem}
%%%%%%%%%%%%%%%%%%%%%%%%%%%%%%%%%%%%%%%%%%%%%%%%%%%%%%%%%%%%%
\begin{theorem}\label{equal}
Let $(\psi_{N})_{N=1}^{\infty}$ be a sequence of bounded $O(n)$-spherical
functions of type 1, and $\psi$ is a bounded $O(n)$-spherical function of
type 1. Then $\psi_{N}$ converges to $\psi$ in the topology of
$\bigtriangleup(O(n),F(n))$(i.e.uniformly on compact sets) if and only if
$\widehat{L}_{\gamma_{i}}(\omega_{\lambda_{N},\alpha_{N}})\rightarrow
\widehat{L}_{\gamma_{i}}(\omega_{\lambda,\alpha})$ \ for \  $i=1,\cdots
d$; $\widehat{T}(\omega_{\lambda_{N},\alpha_{N}})\rightarrow
\widehat{T}(\omega_{\lambda,\alpha})$ and $r_{N}\rightarrow r$.
\end{theorem}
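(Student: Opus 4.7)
The strategy is to treat the two implications separately: for sufficiency, I upgrade convergence of the eigenvalue data to uniform convergence of the $K$-orbital integral defining $\phi^{r,\wedge,l}$; for necessity, I extract the Euclidean parameter $r$ and the Heisenberg eigenvalues from $\psi$ by restricting to suitable slices of $F(n)$ and reading off Taylor coefficients at the identity.

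For sufficiency $(\Leftarrow)$, I first apply the series-expansion theorem for bounded $K$-spherical functions on $\mathbb{H}^{p_0}$ proved in Section~3 to show that if the eigenvalues of $L_{\gamma_i}$ and $T$ converge, then $\omega_{\lambda_N,\alpha_N}\to \omega_{\lambda,\alpha}$ uniformly on compact subsets of $\mathbb{H}^{p_0}$. The expansion identifies each Taylor coefficient of $\omega_{\lambda,\alpha}(z,0)$ with $|\lambda|^{|\delta|}\widehat{L}_{p_\delta}(\phi_\delta)/\dim(P_\delta)$, and the binomial bound $|\widehat{L}_{p_m}(\phi_r)|\le\binom{n+r+m-1}{m}$ established earlier furnishes a uniform dominant, so term-by-term convergence upgrades via a Weierstrass $M$-test to uniform convergence on compact sets. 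Since $\Psi_2^{-1}\circ\overline{q_1}$ is continuous and $K$ is compact, for any compact $C\subset F(n)$ the set $\{\Psi_2^{-1}(\overline{q_1}(k.n)):k\in K,\, n\in C\}$ is compact in $\mathbb{H}^{p_0}$, so $\omega_{\lambda_N,\alpha_N}(\Psi_2^{-1}(\overline{q_1}(k.n)))\to \omega_{\lambda,\alpha}(\Psi_2^{-1}(\overline{q_1}(k.n)))$ uniformly in $(k,n)\in K\times C$. Combined with the uniform convergence $e^{ir_N\langle X_p^*,k.X\rangle}\to e^{ir\langle X_p^*,k.X\rangle}$ on $K\times C$ coming from $r_N\to r$, integrating against the normalized Haar measure on $K$ yields $\psi_N\to\psi$ uniformly on $C$.

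For necessity $(\Rightarrow)$, I plan to recover the three pieces of data in turn. Restricting $n$ to the central slice $n=\exp(A)$ with $A\in\mathcal{Z}$ kills the Euclidean phase and sends $\overline{q_1}(\exp(k.A))$ into the center of $\overline{N_1}$, hence under $\Psi_2^{-1}$ to a central element of $\mathbb{H}^{p_0}$; since $\omega_{\lambda,\alpha}(0,t)=e^{i\lambda t}$, uniform convergence on this central slice of $F(n)$ forces $\lambda_N\to \lambda$, equivalently $\widehat{T}(\omega_{\lambda_N,\alpha_N})\to \widehat{T}(\omega_{\lambda,\alpha})$. Restricting instead to $n=\exp(X)$ with $X\in \mathcal{V}$ removes the central contribution in $\overline{q_1}$ and leaves a $K$-orbital integral with Euclidean phase $e^{ir_N\langle X_p^*,k.X\rangle}$ multiplied by the already-controlled Heisenberg factor; reading off the second-order $X$-Taylor coefficient at $X=0$ isolates $r_N^2$ up to a constant depending only on $\lambda$, so $r_N\to r$ follows. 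Finally, the remaining eigenvalues $\widehat{L}_{\gamma_i}(\omega_{\lambda_N,\alpha_N})$ appear as the coefficients, in the $K$-invariant polynomial basis $\{p_\delta\}$, of the $z$-Taylor expansion of $\omega_{\lambda_N,\alpha_N}(z,0)$ given by the series-expansion theorem, and they are extracted from the $\mathcal{V}$-Taylor expansion of $\psi_N(\exp X)$ after dividing out the Euclidean phase.

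The main obstacle will be the necessity direction, specifically the passage from uniform convergence of $\psi_N$ on compact sets to convergence of Taylor coefficients at the identity. Uniform convergence of real-analytic functions does not in general imply convergence of derivatives, so the critical input is the uniform bound $|\psi_N|\le 1$ on bounded spherical functions together with the explicit polynomial expansions of Section~3 and the binomial eigenvalue estimate: the family $\{\psi_N\}$ is equi-analytic with a uniform radius of convergence, so Cauchy estimates on a small polydisc around the identity, applied coefficient by coefficient in the basis $\{p_\delta\}$, upgrade uniform convergence to convergence of every Taylor coefficient. This is the technical crux on which I anticipate the careful work to concentrate.
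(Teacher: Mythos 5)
Your overall strategy coincides with the paper's: for sufficiency you pass from convergence of the generator eigenvalues to convergence of all $\widehat{L}_{p_\delta}$, feed this into the series expansion of Theorem 3.8, and control the tail with the binomial estimate of Lemma 3.7 (a Weierstrass $M$-test), then compose with $\Psi_2^{-1}\circ\overline{q_1}$ over the compact group $K$; for necessity you restrict to one-parameter slices $\exp(tX)$ with $X$ in $\mathcal{V}$ or in $\mathcal{Z}$ and read the data off derivatives at $t=0$. This is exactly the decomposition the paper uses (the paper takes a $K$-fixed $X$ for $r_N\to r$, a central slice $\exp(tA)$ for $\widehat{T}$, and a Vandermonde-type invertible system of slices $X_1,\dots,X_n$ for the $\widehat{L}_{P_\delta}$).

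The one place where you go beyond the paper is also the one place where your argument does not yet close. You correctly identify that uniform convergence on compacta does not license differentiating the limit relation, but your proposed repair --- equi-analyticity with a uniform radius of convergence, obtained from $|\psi_N|\le 1$ and the binomial estimate, followed by Cauchy estimates --- is circular as stated. The bound $\bigl|\widehat{L}_{p_m}(\phi_r)\bigr|\le\binom{n+r+m-1}{m}$ depends on the parameter $r=|\alpha_N|$, and the Taylor coefficients of $\psi_N(\exp tX)$ carry factors $|\lambda_N|^{|\delta|}$ and $r_N^{j}$; a uniform radius of convergence for the family therefore requires a priori boundedness of $(\lambda_N,|\alpha_N|,r_N)$, which is essentially what the necessity direction is trying to establish. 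The bound $|\psi_N|\le 1$ holds only on the real group and does not control a holomorphic extension to a complex polydisc, so the Cauchy estimates have nothing to bite on. To make this rigorous you either need a separate compactness argument showing the parameter sequence is bounded (and that any limit point of the parameters reproduces the limit function $\psi$, whence convergence by uniqueness of the spherical parametrization), or you should avoid differentiation altogether by using the weak-$*$ characterization of the topology on $\bigtriangleup(O(n),F(n))$: the eigenvalue functionals $\psi\mapsto\widehat{D}(\psi)$ are continuous on the Gelfand space because $\widehat{D}(\psi)\,\widehat{f}(\psi)=\widehat{Df}(\psi)$ for $f\in L^1_K$ with $\widehat{f}(\psi)\neq 0$, and each $\widehat{f}$ is continuous. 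For what it is worth, the paper's own proof differentiates the uniformly convergent sequence with no justification at all, so your instinct locates a real weakness; but the patch needs one of the two repairs above before the necessity direction is complete.
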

\begin{proof}
Suppose that $(\phi_{\gamma_{N},
\lambda_{N},\alpha_{N}}^{\nu}(n))_{N=1}^{\infty}$ converges uniformly to
$\phi_{\gamma, \lambda,\alpha}^{\nu}(n)$ on compact sets of $F(n)$.

Form [6], we can find a $K$-fixed vector $X\in
\mathcal{N}_{p}$(i.e.$k.X=X, \ \forall k\in K$). Let
$n_{t}=exp(tX)$,$-\infty <t< \infty$. From Theorem 3.8, we obtain

\begin{equation}\label{l-invariant elements}
\begin{split}
&\phi_{\gamma_{N},
\lambda_{N},\alpha_{N}}^{\nu}(n_{t})=\int_{K}e^{ir_{N}<X_{p}^{*},k.X
>t}\sum_{\delta \in
\wedge}\frac{\widehat{L}_{P_{\delta}}(\omega_{\lambda_{N},\alpha_{N}})}{dim(P_{\delta})}P_{\delta}(\Psi_{2}^{-1}(\overline{q_{1}(k.exp(tX))}))dk
\\
&=\sum_{\delta \in
\wedge}\frac{\widehat{L}_{P_{\delta}}(\omega_{\lambda_{N},\alpha_{N}})}{dim(P_{\delta})}e^{ir_{N}<X_{p}^{*},X>t}P_{\delta}(\Psi_{2}^{-1}(\overline{q_{1}(exp(X))}))t^{2\left
|\delta  \right |}
\end{split}
\end{equation}

Similarly, $\phi_{\gamma, \lambda,\alpha}^{\nu}(n_{t})=\sum_{\delta \in
\wedge}\frac{\widehat{L}_{P_{\delta}}(\omega_{\lambda,\alpha})}{dim(P_{\delta})}e^{ir<X_{p}^{*},X>t}P_{\delta}(\Psi_{2}^{-1}(\overline{q_{1}(exp(X))}))t^{2\left
|\delta  \right |}$

Since $\phi_{\gamma_{N}, \lambda_{N},\alpha_{N}}^{\nu}(n_{t})\rightarrow
\phi_{\gamma, \lambda,\alpha}^{\nu}(n_{t})$, We have

$\sum_{\delta \in
\wedge}\frac{\widehat{L}_{P_{\delta}}(\omega_{\lambda_{N},\alpha_{N}})}{dim(P_{\delta})}e^{ir_{N}<X_{p}^{*},X>t}P_{\delta}(\Psi_{2}^{-1}(\overline{q_{1}(exp(X))}))\rightarrow
\ \sum_{\delta \in
\wedge}\frac{\widehat{L}_{P_{\delta}}(\omega_{\lambda,\alpha})}{dim(P_{\delta})}e^{ir<X_{p}^{*},X>t}P_{\delta}(\Psi_{2}^{-1}(\overline{q_{1}(exp(X))}))$.

Differentiate with respect to $t$ respectively  we have

$ir_{N} \cdot <X_{p}^{*},X> \sum_{\delta \in
\wedge}\frac{\widehat{L}_{P_{\delta}}(\omega_{\lambda_{N},\alpha_{N}})}{dim(P_{\delta})}e^{ir_{N}<X_{p}^{*},X>t}P_{\delta}(\Psi_{2}^{-1}(\overline{q_{1}(exp(X))}))\rightarrow$

ir $\cdot <X_{p}^{*},X> \sum_{\delta \in
\wedge}\frac{\widehat{L}_{P_{\delta}}(\omega_{\lambda,\alpha})}{dim(P_{\delta})}e^{ir<X_{p}^{*},X>t}P_{\delta}(\Psi_{2}^{-1}(\overline{q_{1}(exp(X))}))$.

Therefore, $r_{N}\rightarrow r$.

Next, we choose a skew symmetric matrix $A$ such that

$\int_{K}\Psi_{2}^{-1}(\overline{q_{1}(k.exp(A))})dk\neq 0$. Since

$\omega_{\lambda_{N},
\alpha_{N}}(\Psi_{2}^{-1}(\overline{q_{1}(k.exp(tA))})=\omega_{\lambda_{N},
\alpha_{N}}(\Psi_{2}^{-1}(\overline{q_{1}(k.exp(A))}t)$

$=e^{\widehat{T}(\omega_{\lambda_{N},
\alpha_{N}})\Psi_{2}^{-1}(\overline{q_{1}(k.exp(A)))}t}$ and

$\omega_{\lambda,
\alpha}(\Psi_{2}^{-1}(\overline{q_{1}(k.exp(tA))})=e^{\widehat{T}(\omega_{\lambda,
\alpha})\Psi_{2}^{-1}(\overline{q_{1}(k.exp(A)))}t}$. Therefore,

$\phi_{\gamma_{N},
\lambda_{N},\alpha_{N}}^{\nu}(exptA)=\int_{K}\omega_{\lambda_{N},
\alpha_{N}}(\Psi_{2}^{-1}(\overline{q_{1}(k.exp(tA))})dk=$

$\int_{K}e^{\widehat{T}(\omega_{\lambda_{N},
\alpha_{N}})\Psi_{2}^{-1}(\overline{q_{1}(k.exp(A)))}t}dk$.

$\phi_{\gamma, \lambda, \alpha}^{\nu}(exptA)=\int_{K}\omega_{\lambda,
\alpha}(\Psi_{2}^{-1}(\overline{q_{1}(k.exp(tA))})dk=$

$\int_{K}e^{\widehat{T}(\omega_{\lambda,
\alpha})\Psi_{2}^{-1}(\overline{q_{1}(k.exp(A)))}t}dk$.

Since $\phi_{\gamma_{N}, \lambda_{N},\alpha_{N}}^{\nu}(exptA)$ converges to
$\phi_{\gamma, \lambda, \alpha}^{\nu}(exptA)$ uniformly for all $-\infty
<t<\infty$.

Differentiate with respect to t respectively and let $t=0$, we obtain

$\widehat{T}(\omega_{\lambda_{N},
\alpha_{N}})\int_{K}\Psi_{2}^{-1}(\overline{q_{1}(k.exp(A))})dk\rightarrow
\widehat{T}(\omega_{\lambda,
\alpha})\int_{K}\Psi_{2}^{-1}(\overline{q_{1}(k.exp(A))})dk$.

Therefore, $\widehat{T}(\omega_{\lambda_{N}, \alpha_{N}})\rightarrow
\widehat{T}(\omega_{\lambda, \alpha})$.

Finally, suppose $\left |\wedge  \right |=n$, we find  $X_{1},\cdots
,X_{n}\in \mathcal{N}_{p}$ such that $<X^{*}_{p},X_{1}>,\cdots
,<X^{*}_{p},X_{n}>$ are different from each other as well as

$\begin{vmatrix}
\int_{K}P_{\delta_{1}}(\Psi_{2}^{-1}(\overline{q_{1}(k.exp(X_{1}))}))dk &
\cdots &
\int_{K}P_{\delta_{n}}(\Psi_{2}^{-1}(\overline{q_{1}(k.exp(X_{1}))}))dk \\
\vdots  & \cdots & \vdots \\
\int_{K}P_{\delta_{1}}(\Psi_{2}^{-1}(\overline{q_{1}(k.exp(X_{n}))}))dk &
\cdots &
\int_{K}P_{\delta_{n}}(\Psi_{2}^{-1}(\overline{q_{1}(k.exp(X_{n}))}))dk
\end{vmatrix}\neq 0$

 Then

$\phi_{\gamma_{N}, \lambda_{N},\alpha_{N}}^{\nu}(exptX_{m})=\sum_{\delta
\in  \wedge}\frac{\widehat{L}_{P_{\delta}}(\omega
_{\lambda_{N},\alpha_{N}})}{dim(P_{\delta})}\int_{K}e^{ir_{N}<X_{p}^{*},X_{m}>t}P_{\delta}(\Psi_{2}^{-1}(\overline{q_{1}(k.exp(tX_{m}))}))dk$.

Similarly, $\phi_{\gamma, \lambda,\alpha}^{\nu}(exptX_{m})=\sum_{\delta
\in  \wedge}\frac{\widehat{L}_{P_{\delta}}(\omega
_{\lambda,\alpha})}{dim(P_{\delta})}\int_{K}e^{ir<X_{p}^{*},X_{m}>t}P_{\delta}(\Psi_{2}^{-1}(\overline{q_{1}(k.exp(tX_{m}))}))dk$.

Here $m=1,\cdots n$.

Since $\phi_{\gamma_{N}, \lambda_{N},\alpha_{N}}^{\nu}(exptX_{m})$
converges uniformly to $\phi_{\gamma, \lambda,\alpha}^{\nu}(exptX_{m})$
and $\gamma_{N}\rightarrow \gamma$. We obtain:

$\sum_{\delta \in  \wedge}\frac{\widehat{L}_{P_{\delta}}(\omega
_{\lambda_{N},\alpha_{N}})}{dim(P_{\delta})}\int_{K}e^{ir<X_{p}^{*},X_{m}>t}P_{\delta}(\Psi_{2}^{-1}(\overline{q_{1}(k.exp(X_{m}))}))dk$
converges uniformly to $\sum_{\delta \in
\wedge}\frac{\widehat{L}_{P_{\delta}}(\omega
_{\lambda,\alpha})}{dim(P_{\delta})}\int_{K}e^{ir<X_{p}^{*},X_{m}>t}P_{\delta}(\Psi_{2}^{-1}(\overline{q_{1}(k.exp(X_{m}))}))dk$.

Here $m=1,\cdots ,n$.

Differentiate with respect to $t$ $j$ times, where $j=1,\cdots ,n-1$ and
let $t=0$, we get

$\sum_{\delta \in  \wedge}\frac{\widehat{L}_{P_{\delta}}(\omega
_{\lambda_{N},\alpha_{N}})}{dim(P_{\delta})}\int_{K}(P_{\delta}(\Psi_{2}^{-1}(\overline{q_{1}(k.exp(X_{m}))}))dk)i^{j}r^{j}<X_{p}^{*},X_{m}>^{j}$
converges uniformly to $\sum_{\delta \in
\wedge}\frac{\widehat{L}_{P_{\delta}}(\omega
_{\lambda,\alpha})}{dim(P_{\delta})}\int_{K}(P_{\delta}(\Psi_{2}^{-1}(\overline{q_{1}(k.exp(X_{m}))}))dk)i^{j}r^{j}<X_{p}^{*},X_{m}>^{j}$.

Therefore, if $N$ large enough, for any $\varepsilon >0$, we have

$\begin{bmatrix}
 -\frac{\varepsilon}{M_{1}}     \\
\vdots  \\
-\frac{\varepsilon}{M_{1}}
\end{bmatrix}<
\begin{bmatrix}
1 & \cdots & 1\\
\vdots  & \cdots & \vdots \\
 i^{n-1}r^{n-1}<X_{p}^{*},X_{1}>^{n-1}& \cdots &
i^{n-1}r^{n-1}<X_{p}^{*},X_{n}>^{n-1}dk
\end{bmatrix}$

$\times$
$\begin{bmatrix}
\sum_{\delta \in  \wedge}\frac{\widehat{L}_{P_{\delta}}(\omega
_{\lambda_{N},\alpha_{N}})-\widehat{L}_{P_{\delta}}(\omega_{\lambda,
\alpha})}{dim(P_{\delta})}P_{\delta}(\Psi_{2}^{-1}(\overline{q_{1}(k.exp(X_{1}))}))dk\\
 \vdots\\
\sum_{\delta \in
\wedge}\frac{\widehat{L}_{P_{\delta}}(\omega_{\lambda_{N},\alpha_{N}})-\widehat{L}_{P_{\delta}}(\omega_{\lambda,
\alpha})}{dim(P_{\delta})}P_{\delta}(\Psi_{2}^{-1}(\overline{q_{1}(k.exp(X_{n}))}))dk
\end{bmatrix}<\begin{bmatrix}
 \frac{\varepsilon}{M_{1}}     \\
\vdots  \\
\frac{\varepsilon}{M_{1}}
\end{bmatrix}$

Therefore, if $M_{1}$ large enough, we have

$\left |\sum_{\delta \in  \wedge}\frac{\widehat{L}_{P_{\delta}}(\omega
_{\lambda_{N},\alpha_{N}})-\widehat{L}_{P_{\delta}}(\omega_{\lambda,
\alpha})}{dim(P_{\delta})}P_{\delta}(\Psi_{2}^{-1}(\overline{q_{1}(k.exp(X_{1}))}))dk\right
|<\frac{\varepsilon}{M_{2}}$,

where $m=1,\cdots n$.

Therefore,

$\begin{bmatrix}
 -\frac{\varepsilon}{M_{2}}     \\
\vdots  \\
-\frac{\varepsilon}{M_{2}}
\end{bmatrix}<
\begin{bmatrix}
\int_{K}P_{\delta_{1}}(\Psi_{2}^{-1}(\overline{q_{1}(k.exp(X_{1}))}))dk &
\cdots &
\int_{K}P_{\delta_{n}}(\Psi_{2}^{-1}(\overline{q_{1}(k.exp(X_{1}))}))dk \\
\vdots  & \cdots & \vdots \\
\int_{K}P_{\delta_{1}}(\Psi_{2}^{-1}(\overline{q_{1}(k.exp(X_{n}))}))dk &
\cdots &
\int_{K}P_{\delta_{n}}(\Psi_{2}^{-1}(\overline{q_{1}(k.exp(X_{n}))}))dk
\end{bmatrix}
\times \begin{bmatrix}
\frac{\widehat{L}_{P_{\delta_{1}}}(\omega
_{\lambda_{N},\alpha_{N}})-\widehat{L}_{P_{\delta_{1}}}(\omega_{\lambda,
\alpha})}{dim(P_{\delta})}\\
 \vdots\\
 \frac{\widehat{L}_{P_{\delta_{n}}}(\omega_{\lambda_{N},\alpha_{N}})-\widehat{L}_{P_{\delta_{n}}}(\omega_{\lambda,
\alpha})}{dim(P_{\delta_{n}})}
\end{bmatrix}<\begin{bmatrix}
 \frac{\varepsilon}{M_{2}}     \\
\vdots  \\
\frac{\varepsilon}{M_{2}}
\end{bmatrix}$

Therefore, if $M_{2}$ large enough, we have
$\frac{\left |\widehat{L}_{P_{\delta_{m}}}(\omega
_{\lambda_{N},\alpha_{N}})-\widehat{L}_{P_{\delta_{m}}}(\omega_{\lambda,
\alpha})\right |}{dim(P_{\delta_{m}})<\varepsilon}$

where $m=1,\cdots n$.

Thus,
$\widehat{L}_{P_{\delta_{m}}}(\omega_{\lambda_{N},\alpha_{N}})\rightarrow
\widehat{L}_{P_{\delta_{m}}}(\omega_{\lambda, \alpha})$,

for $m=1,\cdots n$.

Since $\{\gamma_{1},\cdots ,\gamma_{d}\}\subset \{P_{\delta }\mid \delta
\in \wedge\}$, this shows in particular that

$\widehat{L}_{\gamma_{j}}(\omega_{\lambda_{N},\alpha_{N}})\rightarrow
\widehat{L}_{\gamma_{j}}(\omega_{\lambda, \alpha})$,

for $j=1,\cdots d$.

Conversely, suppose
$\widehat{L}_{\gamma_{j}}(\omega_{\lambda_{N},\alpha_{N}})\rightarrow
\widehat{L}_{\gamma_{j}}(\omega_{\lambda, \alpha})$,

for $j=1,\cdots d$;
$\widehat{T}(\omega_{\lambda_{N},\alpha_{N}})\rightarrow
\widehat{T}(\omega_{\lambda, \alpha})$ and $r_{N}\rightarrow r$. It
follows that $\widehat{L}_{P}(\omega_{\lambda_{N},\alpha_{N}})\rightarrow
\widehat{L}_{P}(\omega_{\lambda, \alpha})$ for every $P\in
\mathbb{C}[V_{R}]^{K}$.

Indeed, each $P\in \mathbb{C}[V_{R}]^{K}$ is a linear combination of
monomials $\gamma^{a}$ in the fundamental invariants,

\begin{equation}\label{l-invariant elements}
\begin{split}
&\lim_{N\rightarrow \infty
}(L_{\gamma^{a}})^{\land}(\omega_{\lambda_{N},\alpha_{N}})=\lim_{N\rightarrow
\infty }(L_{\gamma}^{a}(\omega_{\lambda_{N},\alpha_{N}})+\sum_{\left \|b
\right \|<\left \|a  \right
\|}c_{a,b}\widehat{L}_{\gamma}^{b}(\omega_{\lambda_{N},\alpha_{N}})\widehat{T}(\omega_{\lambda_{N},\alpha_{N}})^{\left
\|a  \right \|-\left \|b  \right \|})\\
&=\lim_{N\rightarrow \infty
}(L_{\gamma}^{a}(\omega_{\lambda,\alpha})+\sum_{\left \|b  \right \|<\left
\|a  \right
\|}c_{a,b}\widehat{L}_{\gamma}^{b}(\omega_{\lambda,\alpha})\widehat{T}(\omega_{\lambda,\alpha})^{\left
\|a  \right \|-\left \|b  \right \|})\\
&=(L_{\gamma^{a}})^{\land}(\omega_{\lambda,\alpha}).
\end{split}
\end{equation}

Suppose that $\psi_{N}=\phi_{\gamma_{N}, \lambda_{N},\alpha_{N}}$ and
$\psi_{N}\rightarrow \psi$, where $\psi=\phi_{\gamma, \lambda,\alpha}$ is
a bounded $O(n)$-spherical function.

Since $\widehat{T}(\omega_{\lambda_{N}, \alpha_{N}})=i\lambda_{N}$
converges to $\widehat{T}(\omega_{\lambda, \alpha})=i\lambda$,
we have $\lambda_{N}\rightarrow \lambda$.

Since $\widehat{L}_{\gamma_{0}}(\omega_{\lambda_{N},\alpha_{N}})=\left
|\lambda_{N}  \right |\widehat{L}_{\gamma_{0}}(\omega_{1,\left
|\alpha_{N}\right|})=\left |\lambda_{N}  \right |(2\left |\alpha _{N}
\right |+n)$ converges to
$\widehat{L}_{\gamma_{0}}(\omega_{\lambda,\alpha})=\left |\lambda  \right
|(2\left |\alpha \right |+n)$. Also, $r_{N}:=\left |\alpha_{N} \right |$
must converges to $ \left |\alpha  \right |$. Thus, both $(\lambda_{N})$
and $(\gamma_{N})$ are bounded sequences and we choose constants
$C_{1},C_{2}$ with $\left |\lambda_{N} \right |\leq C_{1}$, $0\leq
\gamma_{N}\leq C_{2}$ for all $N$. Therefore, $\left |\lambda \right |\leq
C_{1}$,$0\leq \gamma\leq C_{2}$. Choose constant $C_{3}$ and $C_{4}$ with

$\gamma_{0}(Pr_{V}(\Psi_{2}^{-1}(\overline{q_{1}(k.n))}))=\left
|Pr_{V}(\Psi_{2}^{-1}(\overline{q_{1}(k.n))})  \right |^{2}\leq C_{3}$ for
all $n\in S$ and let $\Psi_{2}^{-1}(\overline{q_{1}(k.n))})\in S^{'} \
\forall k\in K$.

Since $S$ is a compact set, $K$ is also a compact set, $S^{'}$ is a
compact set. Here $Pr_{V}:(z,t)\rightarrow z$ and $Pr_{T}:(z,t)\rightarrow
t$.

Thus, we have

$\frac{\gamma_{0}(Pr_{V}(\Psi_{2}^{-1}(\overline{q_{1}(k.n))}))^{m}\left
|\lambda_{N}  \right |^{m}}{m!}\leq \frac{(c_{3}c_{1})^{m}}{m!}\leq
\frac{c_{4}}{2^{m}}$.

for all $m,N$ and all $Pr_{V}(\Psi_{2}^{-1}(\overline{q_{1}(k.n))})\in
S^{'}$, $\forall k\in K$.

And therefore, for $Pr_{V}(\Psi_{2}^{-1}(\overline{q_{1}(k.n))})\in
S^{'}$, $\forall k\in K$, we have

$\left
|e^{i\lambda_{N}Pr_{T}(\Psi_{2}^{-1}(\overline{q_{1}(k.n))})}\sum_{\delta
\in  \wedge, \left |\delta   \right |\geq
M}\frac{\widehat{L}_{P_{\delta}}(\omega
_{\lambda_{N},\alpha_{N}})}{dim(P_{\delta})}P_{\delta}(Pr_{V}(\Psi_{2}^{-1}(\overline{q_{1}(k.n})))
 \right |$

$\leq \sum_{m=M}^{\infty}\left |\widehat{L}_{P_{m}}(\omega
_{1,\alpha_{N}})  \right |\sum_{\left |\delta  \right
|=m}P_{\delta}(Pr_{V}(\Psi_{2}^{-1}(\overline{q_{1}(k.n})))\left
|\lambda_{N}  \right |^{m}$

$\leq
\sum_{m=M}\binom{m+n+\gamma_{N}-1}{m}\frac{1}{2^{m}m!}\gamma_{0}^{m}(Pr_{V}(\Psi_{2}^{-1}(\overline{q_{1}(k.n))}))\left|\lambda_{N}
 \right |^{m}$

$\leq
\frac{c_{4}}{2^{M}}\sum_{m=M}^{\infty}\binom{m+n+\gamma_{N}-1}{m}(\frac{1}{2})^{m}$

$\leq
\frac{c_{4}}{2^{M}}\sum_{m=0}^{\infty}\binom{m+n+\gamma_{N}-1}{m}(\frac{1}{2})^{m}$

$\leq \frac{c_{4}}{2^{M}}2^{n+\gamma_{N}}$

$\leq \frac{c_{5}}{2^{M}}$,

where $c_{5}=c_{4}.2^{n+c_{2}}$. Remember we have $\lambda_{N}\rightarrow
\lambda$, $\gamma_{N}\rightarrow \gamma$, therefore, we have
$\phi_{\gamma_{N}, \lambda_{N},\alpha_{N}}^{\nu}(n)$ converges uniformly
to $\phi_{\gamma, \lambda,\alpha}^{\nu}(n)$ on the compact set $S$.
\end{proof}

\begin{theorem}\label{equal}
Let $(\psi_{N})_{N=1}^{\infty}$ be a sequence of bounded $O(n)$-spherical
functions of type 2, and $\psi$ is a bounded $O(n)$-spherical function of
type 2. Then $\psi_{N}$ converges to $\psi$ in the topology of
$\bigtriangleup(O(n),F(n))$(i.e.uniformly on compact sets) if and only if
$r_{N}\rightarrow r$, where $\psi_{N}(n)=\int_{K}e^{ir_{N}<X_{p}^{*},k.X
>}dk$,$\psi(n)=\int_{K}e^{ir<X_{p}^{*},k.X >}dk$,$n=exp(X+A)$.
\end{theorem}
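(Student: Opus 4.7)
The easy direction assumes $r_N \to r$ and asks for uniform convergence on any compact $S \subset F(n)$. The plan is to set $M := \sup\{|X| : \exp(X+A) \in S\} < \infty$ and apply the elementary bound $|e^{ia} - e^{ib}| \le |a - b|$ together with $|\langle X_p^*, k.X\rangle| \le |X| \le M$ to obtain
\[
|\psi_N(n) - \psi(n)| \le \int_K |e^{i r_N \langle X_p^*, k.X\rangle} - e^{i r \langle X_p^*, k.X\rangle}|\, dk \le M|r_N - r|,
\]
which tends to zero uniformly in $n \in S$.

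For the hard direction, my strategy is to restrict to a one-parameter curve and reduce the problem to a scalar question about entire functions on $\mathbb{R}$. Fix any nonzero $X \in \mathcal{V}$ and set
\[
\phi_X(s) := \int_K e^{is\langle X_p^*, k.X\rangle}\, dk, \qquad s \in \mathbb{R}.
\]
This is an entire even function (even because $-I \in O(n)$) with $\phi_X(0) = 1$ and $\phi_X''(0) = -\int_K \langle X_p^*, k.X\rangle^2\, dk = -|X|^2/n < 0$. Along $n_t := \exp(tX)$ one has $\psi_N(n_t) = \phi_X(r_N t)$ and $\psi(n_t) = \phi_X(rt)$, so the hypothesis delivers $\phi_X(r_N t) \to \phi_X(rt)$ uniformly on compact subsets of $\mathbb{R}$.

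First I would establish that $(r_N)$ is bounded. The pushforward of normalized Haar measure on $O(n)$ under the smooth map $k \mapsto \langle X_p^*, k.X\rangle$ is the marginal of the uniform measure on the sphere of radius $|X|$ in $\mathbb{R}^n$; for $n \ge 2$ this is absolutely continuous with density proportional to $(|X|^2 - y^2)^{(n-3)/2}$ on $[-|X|,|X|]$, so by Riemann--Lebesgue $\phi_X(s) \to 0$ as $|s| \to \infty$. If $r_N \to \infty$ along a subsequence, then $\phi_X(r_N t) \to 0$ for every $t \neq 0$ while $\phi_X(r_N \cdot 0) = 1$, producing a limit discontinuous at $0$ and contradicting uniform convergence to the continuous function $t \mapsto \phi_X(rt)$.

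Finally I would identify the limit. By boundedness I can pass to any convergent subsequence $r_{N_k} \to r^* \ge 0$, and the easy direction applied to $r_{N_k}$ gives $\psi_{N_k} \to \psi^*$ uniformly on compact sets, where $\psi^*$ is the type-$2$ spherical function with parameter $r^*$. Uniqueness of limits forces $\phi_X(r^* t) = \phi_X(rt)$ for all $t \in \mathbb{R}$. If $r = 0$ this reads $\phi_X(r^* t) \equiv 1$, forcing $r^* = 0$ by non-constancy of $\phi_X$. If $r > 0$, putting $\alpha := r^*/r$ yields the scaling identity $\phi_X(\alpha s) = \phi_X(s)$, which iterates to $\phi_X(\alpha^k s) = \phi_X(s)$ for all $k \in \mathbb{Z}$; sending $k \to +\infty$ when $\alpha > 1$ or $k \to -\infty$ when $\alpha < 1$ forces $\phi_X \equiv 0$ off $\{0\}$ or $\phi_X \equiv 1$, each contradicting the properties of $\phi_X$, so $\alpha = 1$. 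Thus every convergent subsequence of $(r_N)$ has limit $r$, and with boundedness this yields $r_N \to r$. The \emph{main obstacle} is this first step: Step 2 is a one-variable analytic triviality once boundedness is in hand, but boundedness itself requires that the $K$-average in $\phi_X$ retain enough oscillation for Riemann--Lebesgue decay, which is precisely where the geometric content (transitivity of $O(n)$ on spheres, non-degeneracy of $\langle X_p^*,\cdot\rangle$ on the orbit) enters.
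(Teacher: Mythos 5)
Your proof is correct, but it takes a genuinely different route from the paper's. For the necessity direction the paper restricts to the curve $t\mapsto\exp(tX)$ for an $X$ chosen so that $\int_K\langle X_p^*,k.X\rangle\,dk\neq 0$, differentiates the uniformly convergent sequence once at $t=0$, and reads off $r_N\to r$ from the first derivative; for sufficiency it says the claim is obvious. You instead establish boundedness of $(r_N)$ from the Riemann--Lebesgue decay of $\phi_X(s)=\int_K e^{is\langle X_p^*,k.X\rangle}\,dk$, and then identify the limit of every convergent subsequence through the scaling rigidity $\phi_X(\alpha s)=\phi_X(s)\Rightarrow\alpha=1$. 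Your route is longer but materially sounder on two counts. First, the paper's choice of $X$ is impossible: $\int_{O(n)}k.X\,dk$ is the projection of $X$ onto the $O(n)$-fixed subspace of $\mathbb{R}^n$, which is $\{0\}$, so $\int_K\langle X_p^*,k.X\rangle\,dk=0$ for every $X$ (equivalently, $\phi_X$ is even, as you note, so its first derivative at $0$ vanishes identically); the paper's first-derivative computation therefore yields $0\to 0$ and no information. One could repair it using the second derivative $-r_N^2\lvert X\rvert^2/n$, but justifying the passage of derivatives through a uniform limit again requires the boundedness of $(r_N)$ that your argument supplies. Second, your Lipschitz estimate $\lvert\psi_N(n)-\psi(n)\rvert\le M\lvert r_N-r\rvert$ is the honest, genuinely uniform version of the ``obvious'' converse. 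The only caveat is the implicit assumption $n\ge 2$ needed for absolute continuity of the pushforward measure in the Riemann--Lebesgue step; you flag this, and it is harmless since the free two-step group degenerates for $n=1$.
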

\begin{proof}
If $\psi_{N}^{\upsilon}(n)$ converges uniformly to $\psi^{\upsilon}(n)$.
Take $X\in \mathcal{N}_{p}$, such that $\int_{K}<X_{p}^{*},k.X >dk\neq 0$.

Then $\psi_{N}(exptX)=\int_{K}e^{ir_{N}<X_{p}^{*},k.X >t}dk\rightarrow
\psi(exptX)=\int_{K}e^{ir<X_{p}^{*},k.X >t}dk$ uniformly, where
$-\infty<t<\infty$.

Differentiate with respect to $t$ and let $t=0$, we obtain

$ir_{N}\int_{K}<X_{p}^{*},k.X >dk\rightarrow ir\int_{K}<X_{p}^{*},k.X >dk$.

Therefore, $r_{N}\rightarrow r$.

Conversely, if $r_{N}\rightarrow r$, its obvious that
$\psi_{N}(n)=\int_{K}e^{ir_{N}<X_{p}^{*},k.X >}dk\rightarrow
\psi(n)=\int_{K}e^{ir<X_{p}^{*},k.X >}dk$.
\end{proof}

\begin{theorem}\label{equal}
$\bigtriangleup(O(n),F(n))$ is a complete metric space.That is, if
$(\psi_{N})_{N=1}^{\infty}$ is a sequence of bounded-$O(n)$-spherical
functions that converges uniformly to $\psi$ on compact subset in $F(n)$,
then $\psi$ is a bounded $O(n)$-spherical function.
\end{theorem}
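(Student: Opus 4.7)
The plan is to verify that any uniform-on-compacts limit $\psi$ of bounded $O(n)$-spherical functions satisfies the classical functional-equation characterization of a bounded $K$-spherical function for a Gelfand pair with compact $K$, namely that $\psi$ is a continuous, bounded, $O(n)$-invariant, nonzero function satisfying
\[
\int_{O(n)} \psi(xky)\, dk \;=\; \psi(x)\psi(y) \qquad (x,y\in F(n)).
\]
Once this is established, the metric completeness of $\bigtriangleup(O(n),F(n))$ follows immediately, since $F(n)$ is $\sigma$-compact and hence the compact-open topology on $C(F(n))$ is metrizable by a countable family of compact-set seminorms.

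First I would record the closure properties that pass automatically to the limit. Each $\psi_N$ is continuous, $O(n)$-invariant, satisfies $\psi_N(e)=1$, and obeys $|\psi_N|\le 1$ (bounded spherical functions for a Gelfand pair with compact $K$ are positive definite, hence majorized by their value at the identity). Uniform convergence of the $\psi_N$ on every compact subset of $F(n)$ forces $\psi$ to be continuous, $O(n)$-invariant, to satisfy $\psi(e)=1$, and to obey $|\psi|\le 1$; in particular $\psi$ is not identically zero.

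Next I would verify the functional equation by a single limit-integral exchange. For fixed $x,y\in F(n)$ the set $\{xky:k\in O(n)\}$ is compact as the continuous image of $O(n)$, so $\psi_N\to\psi$ uniformly on it. Since normalized Haar measure on $O(n)$ is finite, this yields
\[
\int_{O(n)}\psi(xky)\,dk \;=\; \lim_{N\to\infty}\int_{O(n)}\psi_N(xky)\,dk \;=\; \lim_{N\to\infty}\psi_N(x)\psi_N(y) \;=\; \psi(x)\psi(y).
\]
Invoking the standard characterization of bounded spherical functions (valid because $O(n)$ is compact and $(O(n),F(n))$ is a Gelfand pair) then identifies $\psi$ as a bounded $O(n)$-spherical function.

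The proof is essentially routine once the spherical-function characterization via the $K$-integral functional equation is in hand; the one mild subtlety to flag is the necessity of the nontriviality condition $\psi(e)=1$, which prevents the zero function from appearing as a spurious limit. In contrast to Theorems 4.1 and 4.2, no detailed analysis of the type 1 or type 2 parameter spaces is required here, because the functional equation already encodes the full spherical condition in a form manifestly stable under uniform convergence on compact sets; in particular, a sequence whose terms switch between the two types, or whose underlying parameters drift to the boundary of parameter space, cannot cause trouble, because the limiting $\psi$ is recognized as spherical intrinsically rather than through its parameterization.
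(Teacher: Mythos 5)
Your proof is correct, but it runs through a different characterization of spherical functions than the paper does. You verify the classical functional equation $\int_{O(n)}\psi(xky)\,dk=\psi(x)\psi(y)$ directly, using uniform convergence of $\psi_N$ on the compact set $\{xky : k\in O(n)\}$; the paper instead shows that $f\mapsto\int_{F(n)}\psi(n)f(n)\,dn$ is a nonzero continuous multiplicative functional on $L^1_K(F(n))$, by passing the limit through the identity $\int\psi_N\,(f\ast g)=(\int\psi_N f)(\int\psi_N g)$ for compactly supported $f,g$, and then invokes the identification of the Gelfand spectrum of $L^1_K(F(n))$ with the bounded spherical functions. The two routes are equivalent in substance; yours is pointwise and somewhat more elementary (there is no need to recover $\psi$ from a character of the convolution algebra), while the paper's lands directly in $\bigtriangleup(O(n),F(n))$ viewed as the spectrum of the commutative Banach algebra, which is the guise in which the space is used in Section 5. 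Both arguments need the uniform bound $|\psi_N|\le\psi_N(e)=1$ in order to conclude that the limit $\psi$ is bounded (respectively, that the limiting functional is continuous on $L^1_K$). One small correction to your justification of that bound: it is not true that bounded spherical functions are positive definite for every Gelfand pair with compact $K$ --- the paper itself notes that this fails in the semisimple case; the fact you actually need is the special feature of two-step nilpotent pairs recorded in the introduction, citing [4], namely that for $(K,N)$ with $N$ two-step nilpotent the bounded $K$-spherical functions coincide with the positive definite ones.
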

\begin{proof}
It is clear that $\psi$ is continuous, $O(n)$-invariant and bounded
and $\psi(e)=1$. Moreover, if $f,g \in L_{K}^{1}(F(n))$ have compact
support then
\begin{equation}\label{l-invariant elements}
\begin{split}
&\int_{F(n)}\psi(n)(f*g)(n)dn=\lim_{N\rightarrow
\infty}\int_{F(n)}\psi_{N}(n)(f*g)(n)dn\\
&=\lim_{N\rightarrow
\infty}\int_{F(n)}\psi_{N}(n)f(n)dn\int_{F(n)}\psi_{N}(n)g(n)dn \\
&=\int_{F(n)}\psi(n)f(n)dn\int_{F(n)}\psi(n)g(n)dn
\end{split}
\end{equation}

Thus, $f\rightarrow \int_{F(n)}\psi(n)f(n)dn$ defines a continous non-zero
algebra homomorphism $L_{K}^{1}(F(n))\rightarrow \mathbb{C}$. It follows
that $\psi\in \bigtriangleup(O(n),F(n))$.
\end{proof}

%%%%%%%%%%%%%%%%% Section 5 %%%%%%%%%%%%%%%%%%%%%%%%%%%%%%%
\section{The second main theorem and some other results}
%%%%%%%%%%%%%%%%%%%%%%%%%%%%%%%%%%%%%%%%%%%%%%%%%%%%%%%%%%%%
The $O(N)$-Spherical transform for $f\in L_{K}^{1}(F(n))$ is the function

$\widehat{f}:\bigtriangleup(O(n),F(n))\rightarrow \mathbb{C},
\widehat{f}(\psi)=\int_{F(n)}f(n)\psi(n^{-1})dn$.

Here dn denote the haar measure for the group $F(n)$.

One has
\begin{equation}\label{l-invariant elements}
(f*g)^{\wedge}(\psi)=\widehat{f}(\psi)\widehat{g}(\psi)
\end{equation}
and
\begin{equation}\label{l-invariant elements}
(f^{*})^{\wedge}(\psi)=\overline{\widehat{f}(\psi)}
\end{equation}

for $f,g\in L_{K}^{1}(F(n))$, $\psi\in \bigtriangleup(O(n),F(n))$,
$f^{*}(n)=\overline{f(n^{-1})}$.

Let's compute equation (5.2) for example,
\begin{equation}\label{l-invariant elements}
\begin{split}
&(f^{*})^{\wedge}(\psi)=\int_{F(n)}f^{*}(n)\psi(n^{-1})dn\\
&=\int_{F(n)}\overline{f(n^{-1})}\psi(n^{-1})dn=\int_{F(n)}\overline{f(n)}\psi(n)dn
\\
&=\int_{F(n)}\overline{f(n)}\times
\overline{\psi(n^{-1})}dn=\int_{F(n)}\overline{f(n)\psi(n^{-1})}dn \\
&=\overline{\widehat{f}(\psi)}
\end{split}
\end{equation}

The compact open topology is the smallest topology makes all of the maps
$\{\widehat{f}\mid f\in L_{K}^{1}(F(n))\}$ continuous. Since
$L_{K}^{1}(F(n))\}$ is a Banach, $*$-algebra with respect to the
involution $f\rightarrow f^{*}$, it follows that $\widehat{f}$ belongs to
the space $C_{0}(\bigtriangleup(O(n),F(n)))$ of continuous functions on
$\bigtriangleup(O(n),F(n))$ that vanish at infinity. Moreover, we have
$\left \|\widehat{f}  \right \|_{\infty}\leq \left \|f  \right \|_{1}$ for
$f\in \bigtriangleup(O(n),F(n))$. This follows immediately from the fact
that  for $\psi\in \bigtriangleup(O(n),F(n))$ one has $\left |\psi(n)
\right |\leq \psi(e)=1$, since $\psi$ is positive definite.

Godement's Plancherel Theory for Gelfand pairs (cf.[9] or Section 1.6 in
[1]) ensures that there exists a unique positive Borel measure $d\mu$ on
the space $\bigtriangleup(O(n),F(n))$ for which
\begin{equation}\label{l-invariant elements}
\int_{F(n)}\left |f(n)  \right
|^{2}dn=\int_{\bigtriangleup(O(n),F(n))}\left |\widehat{f}(\psi) \right
|d\mu(\psi)
\end{equation}
for all continous functions $f\in L_{K}^{1}(F(n))\cap L_{K}^{2}(F(n))$. If
$f\in L_{K}^{1}(F(n))\cap L_{K}^{2}(F(n))$ is continuous and $\widehat{f}$
is integrable with respect to $d\mu$, then one has the Inversion Formula.
\begin{equation}\label{l-invariant elements}
f(n)=\int_{\bigtriangleup(O(n),F(n))}\widehat{f}(\psi)\overline{\psi(n^{-1})}d\mu(\psi)
\end{equation}

In particular, this formula holds when $f$ is continuous, positive
definite and $K$-invariant. Moreover, the spherical transform
$f\rightarrow \widehat{f}$ extends uniquely to an isomorphism between
$L_{K}^{2}(F(n))$ and $L^{2}(\bigtriangleup(O(n),F(n)),d\mu)$.

Let $\mathcal{L}$ be the set of $\wedge=(\lambda_{1},\cdots
,\lambda_{p^{'}})\in \mathbb{R}^{p^{'}}$ such that $\lambda_{1}>\cdots
>\lambda_{p^{'}}>0$. We define the following measure on $\mathcal{L}$:

$d\wedge=d\lambda_{1}\cdots d\lambda_{p^{'}}$ is the restricted Lebesgue
measure on $\mathcal{L}$,

$d\eta^{'}(\wedge)=\begin{cases}
c\prod_{i=1}^{p^{'}}\lambda_{i}\prod_{j<k}(\lambda_{j}^{2}-\lambda_{k}^{2})^{2}d\lambda_{1}\cdots
d\lambda_{p^{'}}      & \mbox{if }p=2p^{'} \\
c\prod_{i=1}^{p^{'}}\lambda_{i}^{3}\prod_{j<k}(\lambda_{j}^{2}-\lambda_{k}^{2})^{2}d\lambda_{1}\cdots
d\lambda_{p^{'}}, & \mbox{if }p=2p^{'}+1
\end{cases}$.

where the constant $c$ is some constant.

Over $\mathbb{R}^{+}$, we define the measure $\tau$ given as the Lebesgue
measure if $p=2p^{'}+1$, and the Dirac measure in 0, if $p=2p^{'}$.

$c(p)=\begin{cases}
(2\pi)^{-\frac{p(p-1)}{2}+p^{'}}& \mbox{if }p=2p^{'} \\
2(2\pi)^{-\frac{p(p-1)}{2}+p^{'}-1}, & \mbox{if }p=2p^{'}+1
\end{cases}$.

\begin{theorem}\label{equal}
$m^{*}$ is the radial Plancherel measure for $(N_{p},O_{p})$,i.e. for a
$K$-invariant function $\psi\in L^{2}(N)$, we have
\begin{equation}\label{l-invariant elements}
\left \|\psi  \right \|^{2}_{L^{2}(N)}=\int \left
|<\psi,\phi^{r,\wedge,l}>\right |^{2}dm^{*}(r,\wedge,l).
\end{equation}
\end{theorem}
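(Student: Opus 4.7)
The plan is to derive the Plancherel identity for $K$-invariant $L^2$-functions on $N_p$ by first invoking the classical Plancherel formula for the nilpotent group $N_p$ (Kirillov orbit method) and then collapsing that vector-valued identity to a scalar one using $K$-invariance and the description of the spherical functions $\phi^{r,\wedge,l}$ from Section~2.

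Step~1: By Kirillov theory for the free two-step nilpotent group $N_p$, generic unitary irreducibles $\pi_\rho$ are parametrized by the coadjoint orbits of $\rho = rX_p^* + D_2(\wedge)$, so that for $\psi \in L^1(N_p)\cap L^2(N_p)$,
\begin{equation*}
\|\psi\|^2_{L^2(N)} = \int \|\pi_\rho(\psi)\|^2_{HS}\, dP(\rho).
\end{equation*}
Using the polarization $E_1 = \mathbb{R}X_1 \oplus \cdots \oplus \mathbb{R}X_{2p_0-1}$ introduced after Theorem~\ref{equal} and realizing $\pi_\rho$ through the Heisenberg-quotient isomorphism $\Psi_2$, a Pfaffian computation identifies $dP(\rho)$ in the coordinates $(r,\wedge)$ with $c(p)\, d\eta'(\wedge)\, d\tau(r)$: the product $\prod \lambda_i$ and the Vandermonde-like factor $\prod_{j<k}(\lambda_j^2 - \lambda_k^2)^2$ come respectively from $|\mathrm{Pf}(D_2(\wedge))|$ and from the volume of the $O_p$-orbit of $D_2(\wedge)$, while the dichotomy between $p = 2p'$ and $p = 2p'+1$ forces $\tau$ to be Lebesgue measure in the odd case and the Dirac mass at $0$ in the even case (where no transverse $r$-direction survives).

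Step~2: For $K$-invariant $\psi$, the operator $\pi_\rho(\psi)$ commutes with the unitary representation of the stabilizer $K_\rho$ on the representation space $\mathcal{H}_\rho$. By Proposition~2.2, $K_\rho = K_1 \times K_2$ with $K_1 \cong K(m;p_0;p_1)$ acting through $\Psi_2$ on the Heisenberg factor and $K_2$ fixing the direction $X_p^*$. Because $(K(m;p_0;p_1), \mathbb{H}^{p_0})$ is a Gelfand pair, each $K_\rho$-isotypic component of $\mathcal{H}_\rho$ is one-dimensional and indexed exactly by the parameter $l \in \mathbb{N}^{p_1}$, so $\pi_\rho(\psi)$ is diagonal in the orthonormal basis of $K_\rho$-spherical vectors. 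The diagonal entries are precisely matrix coefficients of $\pi_\rho$ evaluated at these spherical vectors, which by unwinding the integral formula for $\phi^{r,\wedge,l}$ (the $K$-average of $e^{ir\langle X_p^*, k.X\rangle}\omega_{\wedge,l}(\Psi_2^{-1}(\overline{q_1}(k.n)))$) equal $\langle \psi, \phi^{r,\wedge,l}\rangle$. Hence
\begin{equation*}
\|\pi_\rho(\psi)\|^2_{HS} = \sum_{l \in \mathbb{N}^{p_1}} |\langle \psi, \phi^{r,\wedge,l}\rangle|^2.
\end{equation*}

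Step~3: Substituting the expression from Step~2 into the Plancherel identity of Step~1 gives exactly the claimed formula with $dm^*(r,\wedge,l) = c(p)\, d\eta'(\wedge)\, d\tau(r)$ tensored with counting measure in $l$; density of continuous $K$-invariant $L^1 \cap L^2$ functions in $L^2_K(N_p)$ then extends it to all $\psi \in L^2_K(N_p)$. The main obstacle is Step~1: pinning down the precise constants $c(p)$ and the exponents on $\lambda_i$ in $d\eta'$ requires carefully tracking the Jacobian of $\Psi_2$ (which rescales each pair $(\overline{X_{2j-1}},\overline{X_{2j}})$ by $\sqrt{|\wedge|/\lambda_j}$), the normalization of Haar measure on $O_p / K_\rho$ parametrizing the coadjoint orbit, and the $(2\pi)^{-p(p-1)/2 + p'}$ factor from the Fourier transform in the central variable. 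Once this bookkeeping is complete, Steps~2 and~3 are essentially formal consequences of the Gelfand-pair structure recalled in Section~2.
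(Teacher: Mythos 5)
First, a point of comparison: the paper itself offers no proof of this theorem. It is stated, followed only by a remark on the structure of $m^{*}$, and then used in the proof of the density theorem; the content is imported from Fischer [6]. So there is no in-paper argument to measure yours against, and I can only assess your proposal on its own terms. Your overall strategy --- Kirillov's Plancherel formula for $N_{p}$, reduction of the generic coadjoint orbits to the parameters $(r,\wedge)$ via the polarization $E_{1}$ and the isomorphism $\Psi_{2}$, then diagonalization of $\pi_{\rho}(\psi)$ using the Gelfand pair $(K(m;p_{1};p_{0}),\mathbb{H}^{p_{0}})$ --- is the standard and correct route, and is essentially how the result is obtained in [6].

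There are, however, two genuine gaps. The decisive one is in your Step 2: multiplicity-freeness of the $K_{\rho}$-action on $\mathcal{H}_{\rho}$ does \emph{not} make the isotypic components one-dimensional; it makes each irreducible constituent $P_{l}$ appear exactly once, so by Schur's lemma $\pi_{\rho}(\psi)$ acts on the component indexed by $l$ as a scalar $c_{l}$ times the identity on a space of dimension $\dim(P_{l})$, which is generally larger than one. Consequently $\left\|\pi_{\rho}(\psi)\right\|_{HS}^{2}=\sum_{l}\dim(P_{l})\left|c_{l}\right|^{2}$ rather than $\sum_{l}\left|c_{l}\right|^{2}$, and the identification of $c_{l}$ with $\langle\psi,\phi^{r,\wedge,l}\rangle$ (via $\widehat{\psi}(\phi^{r,\wedge,l})=\langle\pi_{\rho}(\psi)v,v\rangle$ for a unit $K_{\rho}$-highest vector $v$, using positive-definiteness to convert $\phi(n^{-1})$ into $\overline{\phi(n)}$) must be carried out together with these dimension factors, which then have to be absorbed either into the normalization of $\phi^{r,\wedge,l}$ or into $m^{*}$. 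As written, Step 2 silently drops them, and the resulting identity would be off by the weights $\dim(P_{l})$. The second gap is the one you flag yourself: Step 1, the identification of the abstract Plancherel measure with $c(p)\,d\eta^{'}(\wedge)\,d\tau(r)$, is the actual computational content of the theorem (Weyl-type integration over $\mathcal{A}_{p}$, the Pfaffian of the form $\langle D_{2}(\wedge),[X,Y]\rangle$ on $\mathcal{V}$ modulo its radical, and the Jacobian of $\Psi_{2}$); deferring it means the theorem is reduced to, rather than derived from, the formula being proved. With the dimension bookkeeping corrected and Step 1 carried out, the argument would be complete.
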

Note that $m^{*}$ is given as the tensor product of $\eta^{'}$ on
$\mathcal{L}$, and the counting measure $\sum$ on $\mathbb{N}^{p^{'}}$,
and the measure $\tau$ on $\mathbb{R}^{+}$, up to the normalizing constant
$c(p)$. According to the definition of $d\eta^{'}$, the second type of the
bounded $O(n)$-spherical functions has no compact on the above formula.

\begin{theorem}\label{equal}
The bounded $O(n)$-spherical functions of "type 1" are dense in the space
$\bigtriangleup(O(n),F(n))$.
\end{theorem}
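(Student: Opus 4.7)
The Gelfand space $\bigtriangleup(O(n),F(n))$ is the union of the type 1 and type 2 families, so density reduces to approximating each type 2 function $\phi^{\upsilon}(n)=\int_{K}e^{ir\langle X_{p}^{*},k.X\rangle}\,dk$ (with $r\geq 0$) by type 1 functions in the compact-open topology. The plan is to show that for any compact set $S\subset F(n)$, there exist type 1 functions $\phi^{r,\wedge_{N},l_{N}}$ converging to $\phi^{\upsilon}$ uniformly on $S$.

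Set $r_{N}:=r$ and $l_{N}:=(0,\ldots,0)\in\mathbb{N}^{p_{1}}$, fix some $\wedge_{0}\in\mathcal{L}$ with all components distinct and positive, and set $\wedge_{N}:=\wedge_{0}/N$. Under this uniform rescaling, the factors $\sqrt{|\wedge_{N}|/\lambda^{(N)}_{j}}=\sqrt{|\wedge_{0}|/\lambda_{0,j}}$ appearing in $\Psi_{2}$ are independent of $N$, so $\Psi_{2}^{-1}(\overline{q_{1}}(k.n))$ lies in a fixed compact set $S'\subset\mathbb{H}^{p_{0}}$ uniformly for $(n,k)\in S\times K$ and all $N$. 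The central character $\lambda_{N}$ of $\omega_{\wedge_{N},l_{N}}$ is proportional to $|\wedge_{N}|$, so $\lambda_{N}\to 0$; the trivial-$K$-type Heisenberg spherical function $\omega_{\wedge_{N},l_{N}}$ then converges uniformly to $1$ on $S'$ by the series expansion of Theorem 3.8 (with $l_{N}=0$, only the $\delta=0$ term $e^{i\lambda_{N}t}\to 1$ survives in the limit). Because $|\omega_{\wedge_{N},l_{N}}|\leq 1$, dominated convergence on the integral over $K$ then yields
\[
\phi^{r,\wedge_{N},l_{N}}(n)=\int_{K}e^{ir\langle X_{p}^{*},k.X\rangle}\,\omega_{\wedge_{N},l_{N}}(\Psi_{2}^{-1}(\overline{q_{1}}(k.n)))\,dk\;\longrightarrow\;\phi^{\upsilon}(n)
\]
uniformly for $n\in S$, which is the required density.

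The main obstacle is the uniform tail estimate $\omega_{\wedge_{N},l_{N}}\to 1$ on the fixed compact $S'$ as $\lambda_{N}\to 0$, which is exactly the argument carried out in the converse half of Theorem 4.1: Lemma 3.7 yields a binomial bound for $|\widehat{L}_{p_{m}}|$, the $p_{\delta}$ are bounded on $S'$, and the factor $|\lambda_{N}|^{|\delta|}$ forces the tail $\sum_{|\delta|\geq M}$ of the expansion to be uniformly small for large $M$; the remaining finite sum then converges to the $\delta=0$ term $e^{i\lambda_{N}t}$, itself tending to $1$ uniformly on $S'$. A secondary (but routine) point is verifying that the parameters $(r,\wedge_{N},l_{N})$ do lie in $\mathcal{M}\times\mathbb{N}^{p_{1}}$ for each $N$ so that the functions $\phi^{r,\wedge_{N},l_{N}}$ are genuinely type 1; this holds since $\wedge_{N}\neq 0$ with $p_{0}$ independent of $N$.
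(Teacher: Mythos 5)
Your proposal is essentially correct, but it takes a genuinely different route from the paper. You argue constructively: fix the type 2 parameter $r$, shrink the central parameter to zero along a ray $\wedge_{N}=\wedge_{0}/N$ with $l_{N}=0$, observe that this rescaling leaves the coefficients of $\Psi_{2}$ (hence the set $\{\Psi_{2}^{-1}(\overline{q_{1}}(k.n)):k\in K,\,n\in S\}$) unchanged, and then use the series expansion of Theorem 3.8 together with the tail estimate of Lemma 3.7 --- exactly as in the converse half of Theorem 4.1 --- to get $\omega_{\wedge_{N},0}\to 1$ uniformly on that fixed compact set, whence $\phi^{r,\wedge_{N},0}\to\phi^{\upsilon}$ in the compact-open topology. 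The paper instead argues by contradiction with soft harmonic analysis: if some $\phi^{r,0}$ were separated from the type 1 family, complete regularity of the metrizable space $\bigtriangleup(O(n),F(n))$, density of $\{\widehat{f}\mid f\in L_{K}^{1}(F(n))\}$ in $C_{0}(\bigtriangleup(O(n),F(n)))$, a Hulanicki--Ricci approximate identity, and Dixmier's functional calculus produce a continuous $f\in L_{K}^{1}\cap L_{K}^{2}$ with $\widehat{f}(\phi^{r,0})=1$ and $\widehat{f}=0$ on every type 1 function; since the Godement--Plancherel measure $m^{*}$ of Theorem 5.6 is carried by the type 1 functions, $\|f\|_{2}=0$, so $f\equiv 0$ and $\widehat{f}\equiv 0$, a contradiction. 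Your argument is more elementary and explicit (it exhibits the approximating sequence and shows concretely that the type 2 functions are boundary points of the type 1 family), while the paper's argument encodes the general principle that the support of the Plancherel measure is dense in the Gelfand spectrum and requires no estimates on the spherical functions themselves.

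One point needs repair. The set $\mathcal{M}$ requires $r=0$ whenever $2p_{0}=p$, so when $p=2p'$ is even and $r>0$ you may not take $\wedge_{0}$ with all $p'$ components positive: that would give $2p_{0}=p$ and the parameters $(r,\wedge_{N})$ would not be admissible. You must instead choose $\wedge_{0}$ with at least one zero entry (so that $2p_{0}<p$), after which the same rescaling argument goes through verbatim. It is also worth stating explicitly that your approximating sequence does not depend on the compact set $S$, which is what convergence in the compact-open topology actually requires; as written this is true of your construction but left implicit.
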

\begin{proof}
Take a point $r\in R^{+}$, and suppose that $\phi^{r,0}$ is not in the
closure of $\{\phi^{r,\left |\lambda  \right |,\alpha}\mid r\in R^{+},\left
|\lambda  \right |\in R^{+}, \alpha\in \wedge\}$.
$\bigtriangleup(O(n),F(n))$ is metrizable, hence it is completely regular.
So we can find a continuous function
$J:\bigtriangleup(O(n),F(n))\rightarrow \mathbb{R}$ with
$J(\phi^{r,0})=1$, $J(\phi^{r,\left |\lambda  \right |,\alpha})=0$ for all
$r\in R^{+},\left |\lambda  \right |\in R^{+}, \alpha\in \wedge$. We can
assume that $J$ has compact support.
\\The equation (5.2) ensures that $L_{K}^{1}(F(n))$ is a symmetric banach
$*$-algebra. It follows that $\{\widehat{f}\mid f\in L_{K}^{1}(F(n))$ is
dense in $(C_{0}(\bigtriangleup(O(n),F(n))),\left \|\cdot \right
\|_{\infty})$. (See for example, $\S$ 14 in chapter 3 of [12].) Thus we
can find a sequence $(j_{N})$ in $L_{K}^{1}(F(n))$ with
$\widehat{j_{N}}\rightarrow J$ uniformly on $\bigtriangleup(O(n),F(n))$.
We can assume that each $j_{N}$ is continous and compactly supported.
Moreover, since $J$ is real-valued, we can assume that $j_{N}^{*}=j_{N}$.

Similar to the proof of Proposition 3 in [10] shows that one can find an
approximate identity $(a_{s})_{s>0}$ in $L_{K}^{1}(F(n))$ with
$\widehat{a_{s}}$ compactly supported in $\bigtriangleup(O(n),F(n))$ for
all $s>0$. For $s$ sufficiently small, one has
$\widehat{a_{s}}(\phi^{r,0})>\frac{3}{4}$. moreover, for each $s$ one sees
that $(a_{s}*j_{N})^{\wedge}=\widehat{a_{s}}\widehat{j_{N}}$ converges
uniformly to $\widehat{a_{s}}J$ as $N\rightarrow \infty$. Thus we can
choose $s_{0}$ sufficiently small and $N_{0}$ sufficiently large that
$g=a_{s_{0}}*j_{N_{0}}$ satisfies
$\widehat{a_{s}}(\phi^{r,0})>\frac{1}{2}$ and $\left
|\widehat{a_{s}}(\phi^{r,\left |\lambda  \right |,\alpha})  \right
|<\frac{1}{4}$
for all $r\in R^{+},\left |\lambda  \right |\in R^{+}, \alpha\in \wedge$

Note that $g$ is continuous, integrable, square-integrable and $g^{*}=g$.

Dixmier's functional Calculus (cf.[11]) ensures that "sufficiently smooth
functions operate on $L_{K}^{1}(F(n))$." Thus if $\zeta:
\mathbb{R}\rightarrow \mathbb{R}$ is sufficiently smooth with $\zeta$ and
its derivatives integrable and $\zeta(0)=0$, then there is a function
$f:=\zeta \{g\}\in L_{K}^{1}(F(n))\cap L_{K}^{2}(F(n))\cap C(F(n))$ with
the property that $\widehat{f}=\zeta{\widehat{g}}$. We choose a $\zeta$
with $\zeta(t)=1$ for $t>\frac{1}{2}$ and  $\zeta(t)=0$ for
$t<\frac{1}{4}$. Then $F=\widehat{f}=\widehat{\zeta \{g\}}$ satisfies
$F(\phi^{r,0})=1$ and $F(\phi^{r,\left |\lambda  \right |,\alpha})=0$ for
all $r\in R^{+},\left |\lambda  \right |\in R^{+}, \alpha\in \wedge$.

Now theorem 5.6 shows
\begin{equation}\label{l-invariant elements}
\begin{split}
&\left \|\psi  \right \|^{2}_{L^{2}(N)}=\int \left
|<\psi,\phi^{r,\wedge,l}>\right |^{2}dm^{*}(r,\wedge,l)\\
&=\int \left |\int_{F(n)}\psi(x)\overline{\phi^{r,\wedge,l}(x)}  \right
|^{2}dxdm^{*}(r,\wedge,l)
=\int \left |\int_{F(n)}\psi(x)\phi^{r,\wedge,l}(x^{-1})  \right
|^{2}dxdm^{*}(r,\wedge,l)\\
&=\int \left |\widehat{\psi}(\phi^{r,\wedge,l})\right |^{2}dm^{*}(r,\wedge,l)
\end{split}
\end{equation}

From this and equation (5.4), we obtain $F=0$ a.e. on
$\bigtriangleup(O(n),F(n))$. In particular, $F$ is integrable on
$\bigtriangleup(O(n),F(n))$ and we apply formula (5.5) to conclude
$f\equiv 0$ on $F(n)$. This implies that $F=\widehat{f}$ is identically
zero on $\bigtriangleup(O(n),F(n))$, which contradicts $F(\phi^{r,0})=1$.
\end{proof}

We define the Fourier transform $\mathcal{F}_{\mathcal{N}}(f)$:
$\mathcal{N}\rightarrow \mathbb{C}$ of $f\in L^{1}(\mathcal{N})$ by
\begin{equation}\label{l-invariant elements}
{F}_{\mathcal{N}}(f)(w)=\int_{\mathcal{N}}f(z)e^{-ir<z,w>}dz
\end{equation}
where dz denotes Euclidean measure on $\mathcal{N}_{\mathbb{R}}$ and
$\mathcal{N}$ is the lie algebra of $N_{p}$. With this normalization, one
has $\left \|\mathcal{F}_{\mathcal{N}}(f)  \right \|_{2}=(2\pi)^{n}\left
\|f  \right \|_{2}$. This is because

\begin{equation}\label{l-invariant elements}
\begin{split}
&\left \|\mathcal{F}_{\mathcal{N}}(f)  \right
\|_{2}=\int_{\mathcal{N}}\int_{\mathcal{N}}\int_{\mathcal{N}}f(z)e^{-ir<z,w>}\overline{f(z^{'})}e^{ir<z^{'},w>}dzdz^{'}dw\\
&=\int_{\mathcal{N}}\int_{\mathcal{N}}f(z)\overline{f(z^{'})}\int_{\mathcal{N}}e^{ir<z^{'}-z,w>}dzdz^{'}dw\\
&=(2\pi)^{n}\int_{\mathcal{N}}f(z)\overline{f(z)}dz=(2\pi)^{n}\left \|f
\right \|_{2}
\end{split}
\end{equation}

There are several properties about the Fourier transform, they are as
follows:

(a)$\mathcal{F}_{\mathcal{N}}(c_{1}f_{1}+c_{2}f_{2})=c_{1}\mathcal{F}_{\mathcal{N}}(f_{1})+c_{2}\mathcal{F}_{\mathcal{N}}(f_{2})$,

for $c_{1}，c_{2}\in \mathbb{C}$ and $f_{1}，f_{2}\in L^{1}(\mathcal{N})$.

(b)$\left |\mathcal{F}_{\mathcal{N}}(f)(w)  \right |\leq
\int_{\mathcal{N}}\left |f(z)  \right |dz=\left \|f  \right \|_{1}$ for
all $f\in L^{1}(\mathcal{N})$.

(c)$\mathcal{F}_{\mathcal{N}}(f*g)(w)=\mathcal{F}_{\mathcal{N}}(f)(w)\mathcal{F}_{\mathcal{N}}(g)(w)$
for all $f,g\in  L^{1}(\mathcal{N})$.

(d)$\mathcal{F}_{\mathcal{N}}(\tilde{f})=\overline{\mathcal{F}_{\mathcal{N}}(f)}$
for all $f\in L^{1}(\mathcal{N})$.

(e)Let $(L_{z^{'}}f)(z)=f(z-z^{'})$, then
$(L_{z^{'}}f)(w)=e^{-ir<z^{'},w>}\mathcal{F}_{\mathcal{N}}(f)(w)$.

Conversely,
$\mathcal{F}_{\mathcal{N}}(e^{ir<z^{'},z>}f(z))(w)=L_{Z^{'}}\mathcal{F}_{\mathcal{N}}(f)(w)$
for all $f\in L^{1}(\mathcal{N})$.
\begin{proof}
(a)\begin{equation}\label{l-invariant elements}
\begin{split}
&\mathcal{F}_{\mathcal{N}}(c_{1}f_{1}+c_{2}f_{2})=\int_{\mathcal{N}}(c_{1}f_{1}+c_{2}f_{2})(z)e^{-ir<z,w>}dz\\
&=c_{1}\int_{\mathcal{N}}f_{1}(z)e^{-ir<z,w>}dz+c_{2}\int_{\mathcal{N}}f_{2}(z)e^{-ir<z,w>}dz\\
&=c_{1}\mathcal{F}_{\mathcal{N}}(f_{1})+c_{2}\mathcal{F}_{\mathcal{N}}(f_{2})
\end{split}
\end{equation}

(b)$\left |\mathcal{F}_{\mathcal{N}}(f)(w)  \right |=\left
|\int_{\mathcal{N}}f(z)e^{-ir<z,w>}dz  \right |$

$\leq \int_{\mathcal{N}}\left |f(z)  \right |dz=\left \|f  \right \|_{1}$.

(c)
\begin{equation}\label{l-invariant elements}
\begin{split}
&\mathcal{F}_{\mathcal{N}}(f*g)(w)\\
&=\int_{\mathcal{N}}\int_{\mathcal{N}}f(z-x)g(x)e^{-ir<z,w>}dzdx\\
&=\int_{\mathcal{N}}\int_{\mathcal{N}}f(y)g(x)e^{-ir<x+y,w>}dxdy\\
&=\int_{\mathcal{N}}\int_{\mathcal{N}}f(y)g(x)e^{-ir<y,w>}e^{-ir<x,w>}dxdy\\
&=\int_{\mathcal{N}}f(y)e^{-ir<y,w>}dy\int_{\mathcal{N}}g(x)e^{-ir<x,w>}dx\\
&=\mathcal{F}_{\mathcal{N}}(f)(w)\mathcal{F}_{\mathcal{N}}(g)(w)
\end{split}
\end{equation}

(d)
\begin{equation}\label{l-invariant elements}
\begin{split}
&\mathcal{F}_{\mathcal{N}}(\tilde{f})(\omega)=\int_{\mathcal{N}}\tilde{f}(z)e^{-ir<z,w>}dz\\
&=\int_{\mathcal{N}}\overline{f(-z)}\times \overline{e^{-ir<-z,w>}}dz\\
&=\overline{\int_{\mathcal{N}}f(z)e^{-ir<z,w>}dz}\\
&=\overline{\mathcal{F}_{\mathcal{N}}(f)}(\omega)
\end{split}
\end{equation}

(e)
\begin{equation}\label{l-invariant elements}
\begin{split}
&(L_{z^{'}}f)(w)\\
&=\int_{\mathcal{N}}(L_{z^{'}}f)(z)e^{-ir<z,w>}dz\\
&=\int_{\mathcal{N}}f(z-z^{'})e^{-ir<z,w>}dz\\
&=\int_{\mathcal{N}}f(z)e^{-ir<z+z^{'},w>}dz\\
&=e^{-ir<z^{'},w>}\mathcal{F}_{\mathcal{N}}(f)(w)
\end{split}
\end{equation}

conversely,
\begin{equation}\label{l-invariant elements}
\begin{split}
&\mathcal{F}_{\mathcal{N}}(e^{ir<z^{'},z>}f(z))(w)\\
&=\int_{\mathcal{N}}e^{ir<z^{'},z>}f(z)e^{-ir<z,w>}dz\\
&=\int_{\mathcal{N}}f(z)e^{-ir<z,w-z^{'}>}dz\\
&=\mathcal{F}_{\mathcal{N}}(f)(w-z^{'})\\
&=L_{Z^{'}}\mathcal{F}_{\mathcal{N}}(f)(w)
\end{split}
\end{equation}

Finally, since we know the two different types bounded $O(n)$-spherical
functions, we compute the spherical transform of them respectively.
Remember the two different types bounded $O(n)$-spherical functions are
as follows:
For $n=exp(X+A)\in N$.

Type 1:$\phi^{r,\wedge,l}(n)=\int_{K}e^{ir<X_{p}^{*},k.X
>}\omega_{\wedge,l}(\Psi_{2}^{-1}(\overline{q_{1}}(k.n)))dk$.

Type 2:$\phi^{\upsilon}(n)=\int_{K}e^{ir<X_{p}^{*},k.X >}dk$.

For any $f\in L_{K}^{1}(F(n))$  and ”type 1” bounded $O(n)$-spherical functions,we have
\begin{equation}\label{l-invariant elements}
\begin{split}
&\hat{f}(\phi^{r,\wedge,l})=\int_{N}f(n)\int_{K}e^{-ir<X_{p}^{*},k.X
>}\omega_{\wedge,l}(\Psi_{2}^{-1}(\overline{q_{1}}(k.n^{-1})))dkdn\\
&=\int_{N}f(k.n)\int_{K}e^{-ir<X_{p}^{*},k.X
>}\omega_{\wedge,l}(\Psi_{2}^{-1}(\overline{q_{1}}(k.n^{-1})))dkdn\\
&=\int_{K}\int_{N}f(n)\int_{K}e^{-ir<X_{p}^{*},X
>}\omega_{\wedge,l}(\Psi_{2}^{-1}(\overline{q_{1}}(n^{-1})))dndk\\
&=\int_{N}f(n)e^{-ir<X_{p}^{*},X
>}\omega_{\wedge,l}(\Psi_{2}^{-1}(\overline{q_{1}}(n^{-1})))dn
\end{split}
\end{equation}

For any $f\in L_{K}^{1}(F(n))$  and ”type 2” bounded $O(n)$-spherical functions,we have
\begin{equation}\label{l-invariant elements}
\begin{split}
&\hat{f}(\phi^{\upsilon})=\int_{N}f(n)\int_{K}e^{-ir<X_{p}^{*},k.X
>}dkdn\\
&=\int_{N}f(k.n)\int_{K}e^{-ir<X_{p}^{*},k.X
>}dkdn\\
&=\int_{K}\int_{N}f(n)\int_{K}e^{-ir<X_{p}^{*},X
>}dndk\\
&=\int_{N}f(n)e^{-ir<X_{p}^{*},X
>}dn
\end{split}
\end{equation}

\end{proof}

\end{document}